\documentclass[11pt]{article}
\usepackage{amsmath,amsfonts,amsthm,amssymb,color}
\usepackage{fancybox}

\textheight 8.5in
\topmargin -0.2in
\oddsidemargin 0.20in
\textwidth 6.3in

\newtheorem{theorem}{Theorem}[section]
\newtheorem{corollary}[theorem]{Corollary}
\newtheorem{lemma}[theorem]{Lemma}

\newtheorem{proposition}[theorem]{Proposition}
\newtheorem{claim}[theorem]{Claim}
\newtheorem{fact}[theorem]{Fact}

\theoremstyle{definition}
\newtheorem{definition}[theorem]{Definition}

\newenvironment{fminipage}%
  {\begin{Sbox}\begin{minipage}}%
  {\end{minipage}\end{Sbox}\fbox{\TheSbox}}

\newenvironment{algbox}[0]{\vskip 0.2in
\noindent 
\begin{fminipage}{6.3in}
}{
\end{fminipage}
\vskip 0.2in
}

\def\pgeq{\succcurlyeq}
\def\pge{\succ}

\def\Approx#1{\approx_{#1}}

\def\bvec#1{{\mbox{\boldmath $#1$}}}

\def\defeq{\stackrel{\mathrm{def}}{=}}

\def\ceil#1{\left\lceil #1 \right\rceil}

\def\norm#1{\left\| #1 \right\|}

\newcommand\bb{\boldsymbol{\mathit{b}}}

\newcommand\ee{\boldsymbol{\mathit{e}}}

\newcommand\uu{\boldsymbol{\mathit{u}}}
\newcommand\vv{\boldsymbol{\mathit{v}}}
\newcommand\ww{\boldsymbol{\mathit{w}}}

\newcommand\xx{\boldsymbol{\mathit{x}}}

\renewcommand\AA{\boldsymbol{\mathit{A}}}
\newcommand\BB{\boldsymbol{\mathit{B}}}

\newcommand\DD{\boldsymbol{\mathit{D}}}

\newcommand\II{\boldsymbol{\mathit{I}}}

\newcommand\MM{\boldsymbol{\mathit{M}}}
\newcommand\LL{\boldsymbol{\mathit{L}}}

\newcommand\WW{\boldsymbol{\mathit{W}}}
\newcommand\VV{\boldsymbol{\mathit{V}}}
\newcommand\XX{\boldsymbol{\mathit{X}}}
\newcommand\YY{\boldsymbol{\mathit{Y}}}

\newcommand\ZZ{\boldsymbol{\mathit{Z}}}

\newcommand\AAhat{\boldsymbol{\widehat{\mathit{A}}}}
\newcommand\AAapprox{\boldsymbol{\widetilde{\mathit{A}}}}
\newcommand\DDhat{\boldsymbol{\widehat{\mathit{D}}}}
\newcommand\DDapprox{\boldsymbol{\widetilde{\mathit{D}}}}
\newcommand\LLhat{\boldsymbol{\widehat{\mathit{L}}}}
\newcommand\LLapprox{\boldsymbol{\widetilde{\mathit{L}}}}
\newcommand\MMhat{\boldsymbol{\widehat{\mathit{M}}}}

\newcommand\AAtil{\boldsymbol{\widetilde{\mathit{A}}}}
\newcommand\DDtil{\boldsymbol{\widetilde{\mathit{D}}}}

\newcommand\xxtil{\boldsymbol{\tilde{\mathit{x}}}}

\newcommand\Otil{\widetilde{O}}

\begin{document}

\title{
An Efficient Parallel Solver for SDD Linear Systems
}

\author{
Richard Peng
\thanks{Part of this work was done while at CMU and was supported by a Microsoft Research PhD Fellowship}\\
M.I.T.\\
rpeng@mit.edu
\and
Daniel A. Spielman 
\thanks{Supported in part by AFOSR Award FA9550-12-1-0175, a Simon's Investigator Grant, and a MacArthur Fellowship.}
\\ 
Yale University\\
spielman@cs.yale.edu
}

\maketitle

\begin{abstract}
We present the first parallel algorithm for solving systems of linear equations in 
  symmetric, diagonally dominant (SDD) matrices that runs in
  polylogarithmic time and nearly-linear work.
The heart of our algorithm is a construction of a sparse approximate
  inverse chain for the input matrix: a sequence of sparse matrices
  whose product approximates its inverse.
Whereas other fast algorithms for solving systems of equations in SDD matrices
  exploit low-stretch spanning trees, our algorithm only requires
  spectral graph sparsifiers.
\end{abstract}

\thispagestyle{empty}
\newpage
\setcounter{page}{1}

\section{Introduction}\label{sec:intro}

The problem of solving systems of linear equations in 
  symmetric, diagonally dominant (SDD) matrices, and in particular the Laplacian matrices of graphs,
  arises in applications ranging from the solution of elliptic
  PDEs~\cite{BomanHendricksonVavasis}, to the computation of maximum
  flows in graphs~\cite{DaitchSpielman,ChristianoEtAl,madryFlow},
  to semi-supervised learning~\cite{ZhuGL,ZhouFramework,ZhouBLWS03}.
It has recently been exploited as a primitive in many other algorithms \cite{kelnerMadryPropp,KelnerMillerPeng,OSV,LevinKoutisPeng,koutis2011combinatorial}.

In the last decade, there have been remarkable advances in the development of fast algorithms
  for solving systems of linear equations in SDD matrices.
Following an approach suggested by Vaidya~\cite{Vaidya}, Spielman and Teng~\cite{SpielmanTengLinsolve}
  developed a nearly-linear time algorithm for this problem.
Their algorithm has three main ingredients: a multi-level framework
  suggested by Vaidya~\cite{Vaidya}, Joshi~\cite{Joshi} and
  Reif~\cite{Reif};
  low-stretch spanning tree preconditioners, introduced by Boman and
  Hendrickson~\cite{BomanHendricksonAKPW}
  as an improvement of Vaidya's maximum spanning tree preconditioners;
  and spectral graph sparsifiers~\cite{SpielmanTengSparsifier}.
Koutis, Miller and Peng~\cite{KMP1,KMP2} developed a simpler and
  faster way to exploit these ingredients, resulting in an algorithm for
  finding $\epsilon$-approximate solutions to these systems in
  time $\Otil  (n \log n \log \epsilon^{-1})$.

Kelner, Orecchia, Sidford and Zhu~\cite{KOSZ} discovered a simple and efficient algorithm
  for solving these linear equations that relies only on low-stretch
  spanning trees, avoiding the use of graph sparsifiers and the multi-level framework.
This algorithm has been improved by Lee and Sidford~\cite{LeeSidford}
  to run in time $\Otil (n \log^{3/2} n \log \epsilon^{-1})$.

There has also been some progress in parallelizing these solvers.
When describing  a parallel algorithm, we call the number of operations performed
  its \textit{work} and the time it takes in parallel its \textit{depth}.
For planar graphs, Koutis and Miller gave an algorithm that requires
nearly-linear work and depth close to $m^{1/6}$~\cite{KoutisMiller}.
Their approach was extended to general graphs by Blelloch, \textit{et. al.},
who gave parallel algorithms for constructing low-stretch embeddings,
leading to an algorithm with depth close to $m^{1/3}$  \cite{blelloch2011near}.

We present the first algorithm that requires nearly-linear work and
poly-logarithmic depth.  
Unlike the previous nearly-linear time algorithms for solving systems
  in SDD matrices, our algorithm does not require low-stretch spanning trees.
It merely requires spectral sparsifiers of graphs, which we use 
to construct a \textit{sparse approximate inverse chain}.
It would be easy to solve a system of equations in a matrix if one
  had a sparse matrix that approximates its inverse.
While not all matrices have sparse approximate inverses, we construct something
  almost as good for SDD matrices: a sequence of sparse matrices whose product
  approximates the inverse.

Given a sparse approximate inverse chain, the resulting algorithm for
  solving the linear equations is very simple:
  we multiply a vector by each matrix in the chain twice.
The reason that we multiply by each twice, rather than once, is to
  make the solver a symmetric operator.
This is explained in more detail in Section~\ref{sec:overview}.
The algorithm is analogous to the V-cycle used in the Multigrid method~\cite{MultigridBook},
  whereas the multilevel approach of
  \cite{SpielmanTengLinsolve,KMP1,KMP2} required the solution of the
  smallest systems many times, and better resembles the Multigrid W-cycles\footnote{%
Variants of the Multigrid algorithm are among the most popular approaches to solving large
  SDD linear systems in practice.
The advantage of these algorithms is that they can be efficiently parallelized.
However, there is no analysis proving that they should work in general.
In fact, the theory suggests that V-cycles should be sufficient, while the more
  expensive W-cycles are often necessary.
}
.

The work and depth of our solvers depend on the
  accuracy desired and the
  condition number of the input matrix,
  which is defined in the next section.
As is standard, we say that $\xxtil$ is an $\epsilon$-approximate solution to
  $\MM \xx =\bb$ if $\norm{\xx - \xxtil}_{\MM} \leq \epsilon \norm{\xx}_{\MM},$
  where $\norm{\xx}_{\MM} = (\xx^{T} \MM \xx)^{1/2}$.
We present two types of results about solving linear systems with sparse
  approximate inverse chains.
The first is a proof that they exist and can be constructed in
  polynomial time.

\begin{theorem}
\label{thm:mainExistence}
There is a polynomial time algorithm that on input
 an $n$-dimensional SDDM matrix $\MM$ with $m$ nonzeros and condition number at most $\kappa$, 
 produces a sparse approximate inverse chain that can be used to solve
  any linear equation in $\MM$ to any precision $\epsilon$ in
  work $O ((m + n \log^{3} \kappa ) \log 1/\epsilon)$
  and depth $O (\log n \log \kappa \log 1/\epsilon)$.
\end{theorem}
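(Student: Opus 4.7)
The plan is to base the chain on the symmetric ``squaring'' identity
\[
\MM^{-1} \;=\; \tfrac{1}{2}\DD^{-1} \;+\; \tfrac{1}{2}(\II + \DD^{-1}\AA)(\DD - \AA\DD^{-1}\AA)^{-1}(\II + \AA\DD^{-1}),
\]
valid for any SDDM decomposition $\MM = \DD - \AA$ with $\DD$ diagonal and $\AA$ entrywise nonnegative (one verifies it by left-multiplying by $\MM$). This reduces inverting $\MM$ to inverting the ``squared'' matrix $\MM' = \DD - \AA\DD^{-1}\AA$ plus two sparse mat-vecs. Iterating yields a sequence $\MMtil_0 = \MM, \MMtil_1, \MMtil_2, \ldots$; in normalized coordinates $\XX_i = \DD^{-1/2} \AAtil_i \DD^{-1/2}$, the recursion satisfies $\XX_{i+1} \approx \XX_i^2$, so (after standard preprocessing to bound the spectrum of $\XX_0$ away from $-1$) the spectral radius of $\XX_i$ shrinks quadratically. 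After $d = O(\log \kappa)$ levels, $\smallnorm{\XX_d} \leq 1/2$, so $\MMtil_d$ is a constant-factor spectral approximation of $\DD$ and can be inverted by $O(\log 1/\epsilon)$ Jacobi sweeps.

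The obstacle to taking this literally is that $\AAtil_i \DD^{-1} \AAtil_i$ is typically dense: it corresponds to all length-two walks in the underlying weighted graph. I would replace it at each level by a spectral sparsifier $\AAtil_{i+1}$ satisfying $\DD - \AAtil_{i+1} \approx_{\sigma} \DD - \AAtil_i \DD^{-1} \AAtil_i$ in the Loewner sense, with $\AAtil_{i+1}$ nonnegative and having $\Otil(n/\sigma^2)$ nonzeros. Writing $\AAtil_i \DD^{-1} \AAtil_i = \sum_k \DD_{kk}^{-1} (\AAtil_i \ee_k)(\AAtil_i \ee_k)^{T}$ exhibits it as a sum of PSD rank-one ``edge'' matrices, so a standard sparsifier (e.g., effective-resistance sampling in the style of Spielman--Srivastava, combined with a polynomial-time resistance estimator) applies. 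The theorem only demands polynomial-time construction, so parallelism of this build phase is not an issue here.

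Plugging the sparse $\AAtil_i$ into the identity defines a symmetric V-cycle solve operator $\ZZ_0$, computed by the recursion
\[
\ZZ_i \bb \;=\; \tfrac{1}{2}\DD^{-1}\bb \;+\; \tfrac{1}{2}(\II + \DD^{-1}\AAtil_i)\,\ZZ_{i+1}\!\bigl((\II + \AAtil_i \DD^{-1})\bb\bigr),
\]
with $\ZZ_d$ the cheap Jacobi solve on $\MMtil_d$. Symmetry of $\ZZ_0$ is automatic because every level sandwiches a symmetric inner operator between a matrix and its transpose --- this is the ``multiply by each matrix twice'' remark from the overview. One evaluation of $\ZZ_0$ uses $O(\log \kappa)$ levels of $O(1)$ sparse mat-vecs each, giving depth $O(\log n \log \kappa)$ and work equal to the total nonzero count across the chain. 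Choosing $\sigma = \Theta(1/\log \kappa)$ forces each sparsifier to carry $O(n \log^2 \kappa)$ nonzeros, so summing gives $O(m + n \log^3 \kappa)$ (with $m$ contributed by the unsparsified top level). Finally, since $\ZZ_0 \approx_{O(1)} \MM^{-1}$, wrapping $O(\log 1/\epsilon)$ iterations of preconditioned Chebyshev (or Richardson) iteration around $\ZZ_0$ drives the error to $\epsilon$, yielding the bounds claimed.

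The main technical hurdle is controlling how sparsification errors compound across the chain: a $(1+\sigma)$-error per level naively multiplies to $(1+\sigma)^d$, which is exactly why $\sigma = O(1/\log \kappa)$ is needed, and in turn why the $\log^2 \kappa$ factor appears in each sparsifier's size. The bookkeeping I would set up is an inductive lemma of the shape ``if $\MMtil_{i+1} \approx_\sigma \DD - \AAtil_i \DD^{-1} \AAtil_i$ and $\ZZ_{i+1} \approx_{\tau} \MMtil_{i+1}^{-1}$, then the $\ZZ_i$ above satisfies $\ZZ_i \approx_{\tau + O(\sigma)} \MMtil_i^{-1}$,'' leveraging the sandwich structure to keep errors additive rather than multiplicative. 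Chasing those Loewner inequalities through $O(\log \kappa)$ levels, together with checking that each sparsified $\DD - \AAtil_{i+1}$ remains an SDDM decomposition with the same diagonal $\DD$, is the heart of the proof.
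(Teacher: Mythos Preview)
Your proposal is essentially the paper's own proof: the same squaring identity, the same $O(\log\kappa)$-level chain, the same per-level sparsification with accuracy $\Theta(1/\log\kappa)$, the same additive error bookkeeping via the sandwich structure, and the same outer Richardson wrapper. One detail to tighten: to hit the stated work bound $O((m + n\log^{3}\kappa)\log 1/\epsilon)$ with no hidden $\log n$, you need the $O(n/\sigma^{2})$-edge sparsifiers of Batson--Spielman--Srivastava, not Spielman--Srivastava sampling, which carries an extra $\log n$ (that route yields the weaker Theorem~\ref{thm:mainEfficient} bound); since only polynomial-time construction is required here, BSS is admissible. A second minor point: the paper does not keep the diagonal fixed across levels---sparsifying a Laplacian perturbs the diagonal---so instead it tracks $\DD_{i}\Approx{\epsilon_{i-1}}\DD_{i-1}$ alongside $\MM_{i}\Approx{\epsilon_{i-1}}\DD_{i-1}-\AA_{i-1}\DD_{i-1}^{-1}\AA_{i-1}$ (Definition~\ref{def:saic}), which is what your inductive lemma should carry.
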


The second is a proof that slightly weaker approximate inverse chains exist
  and can be constructed in nearly-linear work and polylogarithmic time.

\begin{theorem}
\label{thm:mainEfficient}
There is an algorithm that on input
 an $n$-dimensional SDDM matrix $\MM$ with $m$ nonzeros and condition number at most $\kappa$, 
 produces with probability at least $1/2$ a sparse approximate inverse chain that can be used to solve
  any linear equation in $\MM$ to any precision $\epsilon$ in
  work $O ((m + n \log^{c} n \log^{3} \kappa ) \log 1/\epsilon)$
  and depth $O (\log n \log \kappa \log 1/\epsilon)$, for some constant $c$.
Moreover, the algorithm runs in work $O (m \log^{c_{1}} n \log^3{\kappa})$ and depth $O (\log^{c_{2}} n \log{\kappa})$
  for some other constants $c_{1}$ and $c_{2}$.
\end{theorem}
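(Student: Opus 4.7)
The plan is to take the chain construction behind Theorem~\ref{thm:mainExistence} and replace every exact intermediate matrix operation with a nearly-linear-work, polylogarithmic-depth \emph{approximate} operation based on spectral sparsification. Morally, each level of the chain is obtained from the previous one by a short matrix polynomial (heuristically a squaring), so that the product of $O(\log \kappa)$ chain elements telescopes into a good approximation of $\MM^{-1}$. The only reason this is slow in Theorem~\ref{thm:mainExistence} is that such polynomials are dense in the worst case, so the first step is to invoke a parallel spectral sparsifier: a routine that, on input an $n$-node SDDM matrix with $m$ nonzeros, outputs in work $O(m \log^{c_0} n)$ and depth $O(\log^{c_0'} n)$ a spectrally $\delta$-close approximation with $O(n \log^{c_0} n / \delta^2)$ nonzeros.

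Applied once to the input $\MM$, this explains the initial $O(m \log^{c_1} n)$ work term; thereafter every $\MMapprox_i$ already has only $\Otil(n)$ nonzeros, so each subsequent sparsification of the short polynomial in $\MMapprox_i$ costs $\Otil(n)$ work and polylog depth. Summing over the $O(\log \kappa)$ levels yields the claimed construction bound, with the cubic-in-$\log \kappa$ factor arising because each level must be sparsified to accuracy $\Theta(1/\log \kappa)$ and because sparsifier sizes scale as $1/\delta^2$. A subtlety is that the matrix to be sparsified at level $i+1$ is not an explicitly-given graph Laplacian but a short polynomial in $\MMapprox_i$. I would handle this by observing that effective-resistance-based sparsifiers need only black-box matrix-vector products and coarse effective-resistance estimates, both of which can be obtained by composing $O(1)$ multiplications by $\MMapprox_i$; this preserves nearly-linear work and polylog depth. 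The per-level errors compose in the same multiplicative fashion as in Theorem~\ref{thm:mainExistence}, and the $1/2$ success probability is obtained by boosting each sparsifier call with a constant number of independent repetitions and union-bounding over the $O(\log \kappa)$ levels.

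Once the chain is in hand, the solver is identical to that of Theorem~\ref{thm:mainExistence}: apply each chain matrix twice to a vector, which keeps the operator symmetric and allows a standard preconditioned iteration to converge in $O(\log(1/\epsilon))$ outer iterations. Since each outer iteration walks through the chain performing one matrix-vector product per level, the total solve cost is $O\bigl((m + n \log^{c} n \log^3 \kappa) \log(1/\epsilon)\bigr)$ work and $O(\log n \log \kappa \log(1/\epsilon))$ depth, as claimed. The main obstacle I expect is this \emph{implicit} sparsification step: existing parallel spectral sparsifiers are stated for explicit graphs, and one must show that the required effective-resistance probes on polynomials-in-sparse-matrices are stable enough to be fed back into the sparsifier at the next level without accumulating uncontrolled error. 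This is where the careful inductive invariant on the chain will live, and where the additional polylogarithmic factors hidden in the constants $c$, $c_1$ and $c_2$ are likely to be spent.
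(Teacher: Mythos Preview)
Your high-level plan is right: construct the chain by sparsifying at each of the $O(\log\kappa)$ levels to accuracy $\Theta(1/\log\kappa)$, then plug into the same solver as in Theorem~\ref{thm:mainExistence}. The solver analysis and the accounting for the $\log^3\kappa$ factor are correct.

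The gap is exactly where you flagged it, but your proposed fix does not work. At level $i$ the matrix to be sparsified is $\DD_i - \AA_i\DD_i^{-1}\AA_i$, which can have $\Theta(n^2)$ nonzeros even when $\AA_i$ has $\Otil(n)$; you cannot afford to form it. You propose to run an effective-resistance sampler using only black-box matrix--vector products with this implicit matrix, claiming that ``coarse effective-resistance estimates \dots\ can be obtained by composing $O(1)$ multiplications by $\MMapprox_i$.'' That is false: effective resistances are quadratic forms in the \emph{pseudoinverse}, $(\ee_v-\ee_w)^T\LL^{\dagger}(\ee_v-\ee_w)$, and the standard Johnson--Lindenstrauss route to estimating them requires solving $O(\log n)$ linear systems in the very matrix you are trying to sparsify. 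Since the whole point is to build such a solver, this is circular. Even granting resistance estimates, you would still need to sample from an implicit edge set of size up to $\binom{n}{2}$ with the right probabilities in $\Otil(n)$ work, which is a separate nontrivial step you do not address.

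The paper avoids both issues by exploiting structure you do not use. Writing $\AA_i\DD_i^{-1}\AA_i=\sum_u \AA_{:,u}\DD_{u,u}^{-1}\AA_{u,:}$, the off-diagonal part is a sum over vertices $u$ of product-weighted \emph{cliques} on the neighbors of $u$. For each such clique the effective resistances have the closed form $(\DD_{u,u}/d_u)(1/\AA_{u,v}+1/\AA_{u,w})$, so one can sample each clique by Spielman--Srivastava directly, with no solver and in $O(\log n)$ time per sample. This yields an explicit $\epsilon$-approximation with $O(m\log n/\epsilon^2)$ edges (Corollary~\ref{cor:sample}), which is then fed to a general parallel spectral sparsifier on an \emph{explicit} graph (Theorem~\ref{thm:efficientSparsify}) to bring the edge count down to $O(n\log^{c}n/\epsilon^2)$. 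Replacing your black-box resistance step with this two-stage clique-based sparsification is what makes the construction go through.
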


We present an overview of our algorithm in Section~\ref{sec:overview},
  and supply the details later.

\section{Background and Notation}\label{sec:background}

Throughout this paper we will consider symmetric matrices.
A symmetric matrix is \textit{diagonally dominant}
  if each diagonal entry is at least%
\footnote{This is sometimes called \textit{weak} diagonal dominance, with
\textit{strict} diagonal dominance referring to the case in which each diagonal
is strictly larger.}
 as large as the
  sum of the absolute values of the other entries in its row.
The problem of solving systems of linear equations in symmetric,
  diagonally dominant (SDD) matrices can be reduced to the problems
  of solving systems in either Laplacian matrices---SDD matrices with
  zero rows sums and non-positive off-diagonal elements---or SDDM
  matrices---positive definite SDD matrices with non-positive off-diagonal elements.
The reductions from one form to another approximately preserve the condition
  numbers of the system (see Appendix \ref{sec:condition}).

A fundamental property of a matrix is its \textit{condition number}:
  the ratio of its largest eigenvalue to its smallest.
In the case of a Laplacian matrix, which has $0$ as an eigenvalue,
we consider the ratio to the second-smallest.
The condition number measures how much the solution to a linear system
  in a matrix can change if small changes are made to the target
  vector or the matrix itself.
Thus, the condition number is a lower bound on the precision required when
  carrying out computations with a matrix.
It is also appears in the running time of many numerical algorithms,
  including the one we present here.
The condition number of the Laplacian matrix of a connected weighted graph with $n$ vertices
  is at most $O (n^{3} w_{max} / w_{min})$, where $w_{max}$ and
  $w_{min}$ are the largest and smallest weights of edges in the graph
  (see \cite[Lemma 6.1]{SpielmanTengLinsolve})
The condition number of a submatrix of a Laplacian is at most
  $O (n^{4} w_{max} / w_{min})$
  (see Appendix \ref{sec:condition}).
Our algorithms will run in time depending upon the logarithm of the
  condition number.

We recall that a symmetric matrix $\XX$ is positive definite if all of its eigenvalues are positive.
We indicate this through the notation $\XX \pge \bvec{0}$, where $\bvec{0}$ indicates
  the all-0 matrix.
We similarly write $\XX \pgeq \bvec{0}$ if $\XX$ is positive semidefinite.
For matrices $\XX$ and $\YY$, we write $\XX \pge \YY$ or $\XX \pgeq \YY$
 to indicate that $\XX - \YY$ is positive definite or positive semidefinite.

This partial ordering allows us to define a notion of approximation for matrices.
We write $\XX \Approx{\epsilon} \YY$ to indicate that
\[
 \exp\left(\epsilon\right) \XX \pgeq \YY \pgeq \exp\left(-\epsilon\right) \XX ,
\]
and we remark that this relation is symmetric.
While it is more common to use $1+\epsilon$ in place of $\exp (\epsilon)$ above,
  the exponential facilitates combining approximations.
The $\epsilon$s will be small everywhere in this paper, 
  and we note that when $\epsilon$ is close to $0$, $\exp(\epsilon)$
is between $1+\epsilon$ and $1+2 \epsilon$.
The following facts about the positive definite order and approximation
  are standard (see \cite{SupportGraph,SupportTheory,SpielmanTengLinsolve}).

\begin{fact}\label{fact}
For positive semidefinite matrices
$\XX$, $\YY$, $\WW$ and $\ZZ$,
\begin{enumerate}
\item [a.] if $\YY \Approx{\epsilon} \ZZ$,
then $\XX + \YY \Approx{\epsilon} \XX + \ZZ$;

\item [b.] if  $\XX \Approx{\epsilon} \YY$ and $\WW \Approx{\epsilon} \ZZ$,
  then $\XX + \WW \Approx{\epsilon} \YY + \ZZ $;

\item [c.]
if $\XX \Approx{\epsilon_1} \YY$ and
$\YY \Approx{\epsilon_2} \ZZ$,
then $\XX \Approx{\epsilon_1 + \epsilon_2} \ZZ$;

\item [d.]
if $\XX$ and $\YY$ are positive definite matrices
  such that $\XX \Approx{\epsilon} \YY$,
  then $\XX^{-1} \Approx{\epsilon} \YY^{-1}$;

\item [e.]
if $\XX \Approx{\epsilon} \YY$
and $\VV$ is a matrix, then
$  \VV^T \XX \VV \Approx{\epsilon} \VV^T \YY \VV.$
\end{enumerate}
\end{fact}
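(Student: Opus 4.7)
The strategy is to unpack $\XX \Approx{\epsilon} \YY$ as the pair of Loewner inequalities
\[
e^{\epsilon} \XX \pgeq \YY \quad \text{and} \quad \YY \pgeq e^{-\epsilon} \XX,
\]
and then derive all five clauses from a short list of elementary properties of the Loewner order on symmetric matrices: (i) additivity, $\AA \pgeq \BB$ implies $\AA + \CC \pgeq \BB + \CC$ for any symmetric $\CC$; (ii) positive scaling, $\AA \pgeq \BB$ and $c \geq 0$ imply $c\AA \pgeq c\BB$; (iii) congruence, $\AA \pgeq \BB$ implies $\VV^{T} \AA \VV \pgeq \VV^{T} \BB \VV$; and (iv) order-reversal of inversion, $\AA \pge \bvec{0}$, $\BB \pge \bvec{0}$ and $\AA \pgeq \BB$ imply $\BB^{-1} \pgeq \AA^{-1}$. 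Properties (i)--(iii) are immediate from the quadratic-form definition $\AA \pgeq \BB \Leftrightarrow \xx^{T} \AA \xx \geq \xx^{T} \BB \xx$ for all $\xx$; property (iv) can be verified by simultaneously diagonalizing $\AA$ and $\BB$ and reducing to the scalar fact $a \geq b > 0 \Rightarrow 1/b \geq 1/a$.

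Given this toolkit, I would dispatch the clauses as follows. For (a), take the two inequalities $e^{\epsilon}\YY \pgeq \ZZ \pgeq e^{-\epsilon}\YY$ and add $\XX$ to every side using (i); then use $\XX \pgeq \bvec{0}$ together with $e^{\epsilon} \geq 1 \geq e^{-\epsilon}$ and another application of (i)--(ii) to upgrade $\XX + e^{\pm\epsilon}\YY$ to $e^{\pm\epsilon}(\XX + \YY)$ on the appropriate side. For (b), apply (i) termwise to add the two upper inequalities $e^{\epsilon}\XX \pgeq \YY$ and $e^{\epsilon}\WW \pgeq \ZZ$, and likewise for the two lower inequalities. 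For (c), scale $e^{\epsilon_{1}}\XX \pgeq \YY$ by $e^{\epsilon_{2}}$ via (ii) and chain with $e^{\epsilon_{2}}\YY \pgeq \ZZ$ using transitivity of $\pgeq$ to obtain $e^{\epsilon_{1}+\epsilon_{2}}\XX \pgeq \ZZ$; the reverse direction is analogous with $e^{-\epsilon_{2}}$. For (e), apply (iii) directly to both defining inequalities.

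The only clause that goes beyond semiring-style manipulation is (d), which requires property (iv). From $e^{\epsilon}\XX \pgeq \YY$ and $\YY \pgeq e^{-\epsilon}\XX$, property (iv) yields $\YY^{-1} \pgeq (e^{\epsilon}\XX)^{-1} = e^{-\epsilon}\XX^{-1}$ and $e^{\epsilon}\XX^{-1} = (e^{-\epsilon}\XX)^{-1} \pgeq \YY^{-1}$, which together are exactly $\XX^{-1} \Approx{\epsilon} \YY^{-1}$. The one subtlety I would flag is that (iv) requires both matrices to be \emph{strictly} positive definite, so I would observe up front that the hypothesis $\XX \pge \bvec{0}$ combined with $\YY \pgeq e^{-\epsilon}\XX$ forces $\YY \pge \bvec{0}$ as well, legitimizing the inversion step. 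With that, the only mildly nontrivial ingredient in the entire fact is the order-reversing behavior of inversion on the PD cone, which is the step I would expect a reader to want to see spelled out.
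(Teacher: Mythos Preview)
Your proof is correct; each of the five clauses follows from the Loewner-order manipulations you describe, and the one genuinely nontrivial step (order-reversal of inversion on the positive definite cone) is correctly isolated and handled.

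As for comparison with the paper: there is nothing to compare. The paper does not prove this fact at all; it simply labels it ``standard'' and cites \cite{SupportGraph,SupportTheory,SpielmanTengLinsolve}. Your write-up therefore supplies exactly the details the paper elects to omit, and the route you take---reducing everything to additivity, positive scaling, congruence, and inversion-reversal of $\pgeq$---is the canonical one a reader would reconstruct from those references. The only minor remark I would add is that in clause (e) the matrix $\VV$ need not be square, so $\VV^{T}\XX\VV$ may be only positive \emph{semi}definite even when $\XX$ is positive definite; your congruence argument via quadratic forms already accommodates this, but it is worth being explicit that $\Approx{\epsilon}$ is being asserted between PSD (not necessarily PD) matrices there.
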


\section{Overview}
\label{sec:overview}

Our algorithm is most naturally described for SDDM matrices.
They are the matrices $\MM$ that can be written as
\[
  \MM = \DD - \AA ,
\]
where $\DD$ is a nonnegative diagonal matrix, $\AA$ is a symmetric nonnegative matrix, and $\DD \pge \AA$.

Our algorithm is inspired by the following identity, which holds for
  matrices $\AA$ of norm less than $1$.
\[
  (\II - \AA)^{-1} = \sum_{i \geq 0} \AA^{i} = 
  \prod_{k \geq 0} (\II + \AA^{2^{k}}).
\]
As the terms corresponding to larger values of $k$ make shrinking contributions,
  it is possible to obtain a good approximation to $(\II - \AA)^{-1}$
  by truncating this product.

We exploit a variation of this identity that
  is symmetric, and thus allows us to employ sparsifiers%
\footnote{The product of sparse approximations of matrices need not be an
  approximation of the product, unless the products are taken symmetrically.
}.
Our identity also applies more naturally to SDDM matrices.
It is:
\begin{equation}\label{eqn:identity}
  (\DD - \AA)^{-1}
 =
 \frac{1}{2}
\left[
  \DD^{-1}
 + 
  (\II + \DD^{-1} \AA)
  (\DD - \AA \DD^{-1} \AA)^{-1}
  (\II + \AA \DD^{-1})
 \right].
\end{equation}
This tells us that, aside for a few multiplications of a vector by a matrix,
   we can reduce the problem of solving a linear equation in $\DD - \AA$
  to the problem of solving a linear equation in $\DD - \AA \DD^{-1} \AA$.
Moreover, this latter matrix is also a SDDM matrix (Proposition~\ref{prop:exactGood}).
We can in turn reduce this to the problem of solving a system of equations in
\[
  \DD - (\AA \DD^{-1} \AA) \DD^{-1} (\AA \DD^{-1} \AA ) 
= 
  \DD - \DD (\DD^{-1} \AA)^{4}.
\]
After applying this identity $k$ times, we obtain the matrix
\[
  \DD - \DD (\DD^{-1} \AA)^{2^{k}}.
\]
As $k$ grows large, it becomes
 easy to approximately solve systems of linear equations in this matrix because
  all of the eigenvalues of $\DD^{-1} \AA$ are less than $1$ in absolute value.
So, the term $(\DD^{-1} \AA)^{2^{k}}$ becomes negligible and the matrix approaches $\DD$
  (Corollary \ref{cor:chainShort}).

However, this does not yet yield an efficient algorithm as it requires us
  to multiply vectors by matrices of the form
\[
  \II + (\DD^{-1} \AA)^{2^{k}}
\]
for several integers $k$.
This can require a lot of work if those matrices are dense.
We overcome this problem by sparsifying those matrices.

We now describe this process in more detail.
Let $\MM_{0} = \DD_{0} - \AA_{0}$ be the matrix that defines the equation we want to solve.
We observe (in Proposition~\ref{prop:exactGood}) that the matrix $\DD_{0} - \AA_{0} \DD_{0}^{-1} \AA_{0}$
  is also positive definite and diagonally dominant.
So, a spectral sparsifier~\cite{SpielmanTengSparsifier,BSS} will allow us to 
  approximate this matrix by a sparse SDDM matrix $\DD_{1} - \AA_{1}$.
Proceeding in this fashion, we obtain a sequence of sparse matrices
\[
  \MM_{i} = \DD_{i} - \AA_{i}
\]
such that
\begin{equation}\label{eqn:overviewApprox}
  (\DD_{i} - \AA_{i})^{-1}
 \approx
 \frac{1}{2}
\left[
  \DD_{i}^{-1}
 + 
  (\II + \DD_{i}^{-1} \AA_{i})
  (\DD_{i+1} - \AA_{i+1})^{-1}
  (\II + \AA_{i} \DD_{i}^{-1} )
 \right]
\end{equation}
and so that $\AA_{i}$ becomes negligible as $i$ grows.
These provide a natural algorithm for solving a system of equations 
  of the form $\MM_{0} \xx = \bb_{0}$, which we now present.

\begin{algbox}
$\xx = \textsc{Solve}((\MM_0, \DD_0) \ldots (\MM_d, \DD_d),
\bb)$
\begin{enumerate}
\item [1.] For $i$ from $1$ to $d$, set $  \bb_{i} =  (\II + \AA_{i} \DD_{i}^{-1} ) \bb_{i-1}.$

\item [2.] Set $\xx_{d} = \DD_{d}^{-1} \bb_{d}$.
\item [3.] For $i$ from $d-1$ downto $0$, set
$  \xx_{i} = \frac{1}{2} (\DD_{i}^{-1} \bb_{i} +  (\II +  \DD_{i}^{-1} \AA_{i} ) \xx_{i+1}).$
\end{enumerate}
\end{algbox}

The work required by \textsc{Solve} is proportional to the total number of nonzero entries
  in the matrices in the chain, and the depth is proportional to $\log n$ times $d$.

If the approximations in \eqref{eqn:overviewApprox} are good enough, then $\MM \xx_{0}$ will
  be a good approximation of $\bb_{0}$.
We now roughly estimate how good these approximations need to be.
We carry out the details in the following sections.
If the condition number of $\MM$ is $\kappa$, then the largest eigenvalue of
  $\DD^{-1} \AA$ at most $1-1/\kappa$ (Proposition \ref{pro:ADinv}).

So, if we take $d = O (\log (\kappa ))$, all of the eigenvalues of
  $(\DD^{-1} \AA)^{2^{d}}$ will be close to 0.
At every level of the recursion, we will incur a loss in approximation quality.
To limit the overall loss to a constant, we will require the approximation
  in \eqref{eqn:overviewApprox} to be at least as good as $\Approx{1/2d}$.
Using the best-known sparsifiers, we could achieve this with sparsifiers that have $O(d^{2} n)$ edges.
So, the total number of nonzero entries in all of the matrices $A_{1}, \dotsc , A_{d}$ will be
  $O (d^{3} n) = O (n \log^{3} \kappa)$.
We establish the existence of such chains in Section~\ref{sec:exist}.
If we instead use sparsifiers that we presently know how to construct efficiently,
we multiply the total number of nonzero entries by a factor of  $O(\log^{c} n)$, for some constant $c$.
In Section~\ref{sec:construct}, we give a nearly linear work, polylog depth
algorithm for constructing these chains.

\section{Approximate Inverse Chains}\label{sec:saic}

\begin{definition}\label{def:saic}
We say that two sequences of matrices $\DD_{1}, \dotsc , \DD_{d}$
  and $\AA_{1}, \dotsc , \AA_{d}$ are an approximate inverse chain
  for a SDDM matrix $\MM_{0} = \DD_{0} - \AA_{0}$ if, 
  for all $i$, $\DD_{i}$ is a nonnegative diagonal matrix, $\AA_{i}$
  is a nonnegative symmetric matrix, $\MM_{i} \defeq \DD_{i} - \AA_{i} \pge \bvec{0}$, and
  there exist numbers $\epsilon_{0}, \dotsc , \epsilon_{d}$ such that
  $\sum_{i=0}^{d} \epsilon_{i} \leq 2$ and
\begin{itemize}
\item [a.] for all $1 \leq i \leq d$, 
  $\MM_{i} \Approx{\epsilon_{i-1}} \DD_{i-1} - \AA_{i-1} \DD_{i-1}^{-1} \AA_{i-1}$;

\item [b.] for all $1 \leq i \leq d$, $\DD_{i} \Approx{\epsilon_{i-1}} \DD_{i-1}$; and,

\item [c.] $\DD_{d} \Approx{\epsilon_{d}} \MM_{d}$.
\end{itemize}
\end{definition}

For the rest of this section, we assume that
  $\DD_{1}, \dotsc , \DD_{d}$
  and $\AA_{1}, \dotsc , \AA_{d}$ are an approximate inverse chain
  for $\DD_{0} - \AA_{0}$.

The choice of the bound of $2$ on $\sum \epsilon_{i}$ is not particularly important:
  any constant will allow us to use an approximate inverse chain to solve systems
  of linear equations in $\DD_{0} - \AA_{0}$.
We remark that many of the matrices $\AA_{i}$ will have nonzero diagonal entries.
For this reason, we keep the representation of $\DD_{i}$ and $\AA_{i}$ separate.

We now show that an approximate inverse chain allows one to crudely solve systems of linear
  equations in $\DD_{0} - \AA_{0}$ in time proportional to the number
  of nonzero entries in the matrices in the chain.
We first verify that if we replace $\DD_{i} - \AA_{i} \DD_{i}^{-1} \AA_{i}$ by  $\MM_{i+1}$ in identity \eqref{eqn:identity},
  then we still obtain a good approximation to $\MM_{i}^{-1}$.

\begin{lemma}\label{lem:levelPrecon}
For $0 \leq i < d$, 
\[
\MM_i^{-1} 
\Approx{\epsilon_{i}} 
\frac{1}{2} \left( \DD_i^{-1} + \left( \II + \DD_i^{-1} \AA_i \right) \MM^{-1}_{i + 1} \left(\II + \AA_i \DD_i^{-1}  \right) \right).
\]
\end{lemma}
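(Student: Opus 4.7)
The plan is to derive this as an immediate consequence of the exact identity \eqref{eqn:identity} combined with condition (a) of Definition~\ref{def:saic}, using the standard closure properties of $\Approx{\epsilon}$ recorded in Fact~\ref{fact}.

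First I would instantiate the identity \eqref{eqn:identity} at level $i$, with $\DD = \DD_i$ and $\AA = \AA_i$, to obtain the exact equality
\[
\MM_i^{-1} \;=\; \tfrac{1}{2}\left[\DD_i^{-1} + \left(\II + \DD_i^{-1}\AA_i\right)\left(\DD_i - \AA_i\DD_i^{-1}\AA_i\right)^{-1}\left(\II + \AA_i\DD_i^{-1}\right)\right].
\]
This reduces the lemma to showing that replacing the inverse of $\DD_i - \AA_i\DD_i^{-1}\AA_i$ by $\MM_{i+1}^{-1}$ preserves $\Approx{\epsilon_i}$.

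Next I would chain together three parts of Fact~\ref{fact}. By hypothesis (a), $\MM_{i+1} \Approx{\epsilon_i} \DD_i - \AA_i\DD_i^{-1}\AA_i$; both sides are positive definite (Proposition~\ref{prop:exactGood} handles the right-hand side), so Fact~\ref{fact}(d) yields
\[
\MM_{i+1}^{-1} \Approx{\epsilon_i} \left(\DD_i - \AA_i\DD_i^{-1}\AA_i\right)^{-1}.
\]
Setting $\VV = \II + \AA_i\DD_i^{-1}$ and observing that $\VV^T = \II + \DD_i^{-1}\AA_i$ since $\DD_i$ and $\AA_i$ are symmetric, Fact~\ref{fact}(e) gives
\[
\left(\II + \DD_i^{-1}\AA_i\right)\MM_{i+1}^{-1}\left(\II + \AA_i\DD_i^{-1}\right) \Approx{\epsilon_i} \left(\II + \DD_i^{-1}\AA_i\right)\left(\DD_i - \AA_i\DD_i^{-1}\AA_i\right)^{-1}\left(\II + \AA_i\DD_i^{-1}\right).
\]
Then I would apply Fact~\ref{fact}(a) to add the (positive semidefinite) matrix $\DD_i^{-1}$ to both sides, and finally use the fact that scaling both sides of $\Approx{\epsilon}$ by the positive constant $\tfrac{1}{2}$ preserves the relation, which immediately yields the claimed inequality when combined with the expression for $\MM_i^{-1}$ above.

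There is no serious obstacle here; the only mildly delicate point is the bookkeeping that $(\II + \DD_i^{-1}\AA_i)$ and $(\II + \AA_i\DD_i^{-1})$ are transposes of one another, which is exactly what is needed to invoke the congruence form of Fact~\ref{fact}(e). Everything else is a mechanical chaining of the closure properties of $\Approx{\cdot}$.
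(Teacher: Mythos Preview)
Your proposal is correct and follows essentially the same route as the paper's proof: apply Fact~\ref{fact}d to condition~(a) of Definition~\ref{def:saic}, then Fact~\ref{fact}e to conjugate by $\II + \AA_i\DD_i^{-1}$, then Fact~\ref{fact}a to add $\DD_i^{-1}$ and scale, and finish via identity~\eqref{eqn:identity}. Your explicit remark that $(\II + \DD_i^{-1}\AA_i)^T = \II + \AA_i\DD_i^{-1}$ is a helpful piece of bookkeeping that the paper leaves implicit.
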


\begin{proof}
We have
\begin{align*}
 \MM_{i+1}
& \Approx{\epsilon_{i}}
  \DD_{i} - \AA_{i}  \DD_{i}^{-1} \AA_{i},
\quad  \text{which by Fact \ref{fact}d implies}
\\
 \MM_{i+1}^{-1}
& \Approx{\epsilon_{i}}
  (\DD_{i} - \AA_{i}  \DD_{i}^{-1} \AA_{i})^{-1},
\quad  
\text{which by Fact \ref{fact}e implies that}
\end{align*}
\begin{align*}
 (\II + \DD_{i}^{-1} \AA_{i})
 \MM_{i+1}^{-1}
 (\II + \AA_{i} \DD_{i}^{-1})
& \Approx{\epsilon_{i}}
 (\II + \DD_{i}^{-1} \AA_{i})
  (\DD_{i} - \AA_{i}  \DD_{i}^{-1} \AA_{i})^{-1}
 (\II + \AA_{i} \DD_{i}^{-1}),
\end{align*}
which by Fact \ref{fact}a implies that
\begin{multline*}
\frac{1}{2}
\left(
\DD_{i}^{-1} +  
 (\II + \DD_{i}^{-1} \AA_{i})
 \MM_{i+1}^{-1}
 (\II + \AA_{i} \DD_{i}^{-1})
 \right)
\\
 \Approx{\epsilon_{i}}
\frac{1}{2}
\left(
\DD_{i}^{-1} +  
 (\II + \DD_{i}^{-1} \AA_{i})
  (\DD_{i} - \AA_{i}  \DD_{i}^{-1} \AA_{i})^{-1}
  (\II + \AA_{i} \DD_{i}^{-1})
 \right)
= \MM_{i}^{-1}.
\qedhere
\end{multline*}
\end{proof}

We now use Lemma~\ref{lem:levelPrecon}, 
  to prove that \textsc{Solve} approximates
  the inverse of $\MM_{0}$.

\begin{lemma}
\label{lem:goodOperator}
Let $\ZZ_{0}$ be the operator defined by \textsc{Solve}.
That is, $\xx_{0} = \ZZ_{0} \bb_{0}$.
Then, 
\[
\ZZ_{0} \Approx{\sum_{i=0}^{d} \epsilon_{i}} \MM_{0}^{-1}.
\]
\end{lemma}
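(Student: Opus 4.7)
The plan is to unwind \textsc{Solve} into a symmetric linear operator $\ZZ_i$ at each level $i$, observe that these operators satisfy the same recurrence that appears in Lemma~\ref{lem:levelPrecon}, and then run a backward induction on $i$ using the spectral approximation facts.

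Concretely, for each $0\le i\le d$, let $\ZZ_i$ denote the linear map that sends the intermediate right-hand side $\bb_i$ produced in Step~1 to the intermediate output $\xx_i$ produced in Step~3. The base case $\xx_d=\DD_d^{-1}\bb_d$ gives $\ZZ_d=\DD_d^{-1}$, while Step~3 combined with the forward sweep $\bb_{i+1}=(\II+\AA_i\DD_i^{-1})\bb_i$ yields
\[
\ZZ_i \;=\; \tfrac12\left(\DD_i^{-1} + (\II+\DD_i^{-1}\AA_i)\,\ZZ_{i+1}\,(\II+\AA_i\DD_i^{-1})\right).
\]
Because $\DD_i$ and $\AA_i$ are symmetric, $(\II+\DD_i^{-1}\AA_i)^T = \II+\AA_i\DD_i^{-1}$, so each $\ZZ_i$ is symmetric. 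A simple induction (adding the positive definite matrix $\DD_i^{-1}$ to a congruence of $\ZZ_{i+1}$ at each step) moreover shows each $\ZZ_i$ is positive definite, so the spectral approximation relation is well defined between $\ZZ_i$ and $\MM_i^{-1}$.

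The main claim, proved by backward induction on $i$, is that
\[
\ZZ_i \;\Approx{\sum_{j=i}^{d}\epsilon_j}\; \MM_i^{-1}.
\]
For the base $i=d$, Definition~\ref{def:saic}(c) gives $\DD_d\Approx{\epsilon_d}\MM_d$, and Fact~\ref{fact}d then yields $\ZZ_d=\DD_d^{-1}\Approx{\epsilon_d}\MM_d^{-1}$. For the inductive step, assume $\ZZ_{i+1}\Approx{\sum_{j=i+1}^{d}\epsilon_j}\MM_{i+1}^{-1}$. Applying Fact~\ref{fact}e with $\VV=\II+\AA_i\DD_i^{-1}$, then Fact~\ref{fact}a to add $\DD_i^{-1}$, and then scaling by $\tfrac12$ (which does not affect the approximation relation), I obtain
\[
\ZZ_i \;\Approx{\sum_{j=i+1}^{d}\epsilon_j}\; \tfrac12\left(\DD_i^{-1} + (\II+\DD_i^{-1}\AA_i)\,\MM_{i+1}^{-1}\,(\II+\AA_i\DD_i^{-1})\right).
\]
Since Lemma~\ref{lem:levelPrecon} says the right-hand side above is itself an $\epsilon_i$-approximation of $\MM_i^{-1}$, transitivity (Fact~\ref{fact}c) closes the induction. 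Setting $i=0$ then yields the lemma.

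Nothing here looks delicate, so I do not expect a real obstacle; the one point that must be tracked carefully is the symmetry identity $(\II+\DD_i^{-1}\AA_i)^T=\II+\AA_i\DD_i^{-1}$, which is precisely what makes the forward and backward sweeps of \textsc{Solve} compose into a symmetric operator and what lets Fact~\ref{fact}e apply cleanly at the inductive step.
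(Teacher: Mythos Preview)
Your proof is correct and follows essentially the same approach as the paper: define the operators $\ZZ_i$ by the recurrence induced by \textsc{Solve}, and run a reverse induction using Fact~\ref{fact}d for the base case and Facts~\ref{fact}e,~\ref{fact}a,~\ref{fact}c together with Lemma~\ref{lem:levelPrecon} for the step. Your explicit remarks on symmetry and positive definiteness of $\ZZ_i$ are a small bit of extra care the paper leaves implicit.
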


\begin{proof}
Let $\ZZ_{i}$ be the operator such that $\xx_{i} = \ZZ_{i} \bb_{i}$.
That is,
\begin{enumerate}
\item[1.] $  \ZZ_d = \DD_d^{-1}$, and
\item[2.] for  $0 \leq i \leq d - 1$,
$  \ZZ_{i} = \frac{1}{2} \left( \DD_i^{-1} + \left( \II + \DD_i^{-1}\AA_i \right)
\ZZ_{i + 1} \left( \II + \AA_i \DD_i^{-1} \right) \right) $.
\end{enumerate}
We will prove by reverse induction on $i$ that
\[
  \ZZ_i \Approx{\sum_{j = i}^{d} \epsilon_j} \MM_i^{-1}.
\]
The base case of $i = d$ follows from applying Fact~\ref{fact}d to
  $\DD_{d} \Approx{\epsilon_{d}} \MM_{d}$.
For the induction, suppose the result is true for $i + 1$.
Then the induction hypothesis gives
\[
  \ZZ_{i + 1} \Approx{\sum_{j = i + 1}^{d} \epsilon_{j}} \MM_{i + 1}^{-1},
\]
which by Fact~\ref{fact}e gives
\[
  \left( \II + \DD_i^{-1}\AA_i \right) 
  \ZZ_{i + 1} 
  \left( \II + \AA_i \DD_i^{-1} \right) 
\Approx{\sum_{j = i + 1}^{d} \epsilon_j} 
  \left( \II + \DD_i^{-1}\AA_i \right)
  \MM_{i + 1}^{-1}
  \left( \II + \AA_i \DD_i^{-1} \right),
\]
and, by Fact~\ref{fact}a, 
\[
  \ZZ_{i}
\Approx{\sum_{j = i + 1}^{d} \epsilon_j} 
 \frac{1}{2}
 \left(\DD_{i}^{-1} + 
  \left( \II + \DD_i^{-1}\AA_i \right)
  \MM_{i + 1}^{-1}
  \left( \II + \AA_i \DD_i^{-1} \right)
 \right)
\Approx{\epsilon_{i}}
\MM_{i}^{-1},
\]
by Lemma~\ref{lem:levelPrecon}.
An application of Fact~\ref{fact}c completes the induction.
\end{proof}

This leads to a constant factor approximation of $\MM_{0}$.
In order to turn this into a high quality approximation,
  we can use preconditioned Richardson iteration.

\begin{lemma}[Preconditioned Richardson Iteration]
\label{lem:preconRichardson}
 \cite{Saad03:book,Axelsson94:book}
There exists an algorithm $\textsc{PreconRichardson}$ such that
for any symmetric positive semi-definite matrices $\AA$ and $\BB$
such that $\BB \Approx{O(1)} \AA^{-1}$,
and  any error tolerance $0 < \epsilon \leq 1/2$,
\begin{enumerate}
\item Under exact arithmetic,
$\textsc{PreconRichardson}(\AA, \BB, \bb, \epsilon)$ is a linear operator on $\bb$
and if $\ZZ$ is the matrix such that $\ZZ \bb = \textsc{PreconRichardson}(\AA, \BB, \bb, \epsilon)$, then
\[
\ZZ \Approx{\epsilon} \AA^{-1};
\] 
\item $\textsc{PreconRichardson}(\AA, \BB, \bb, \epsilon)$ takes
$O(\log(1/\epsilon))$ iterations, each consisting of one 
  multiplication of a vector by $\AA$ and one by $\BB$.
\end{enumerate}
\end{lemma}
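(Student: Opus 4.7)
My plan is to analyze the standard preconditioned Richardson iteration
$\xx^{(k+1)} = \xx^{(k)} + \eta \BB (\bb - \AA \xx^{(k)})$
starting from $\xx^{(0)} = \bvec{0}$, with a step size $\eta$ chosen from the approximation constant between $\BB$ and $\AA^{-1}$. Unrolling the recurrence gives $\xx^{(k)} = \ZZ_{k} \bb$, where
$\ZZ_{k} = \eta \sum_{i=0}^{k-1} (\II - \eta \BB \AA)^{i} \BB$.
Item~2 of the lemma is then immediate: each of the $k$ iterations performs one multiplication by $\AA$ (to form the residual) and one by $\BB$, so it remains to choose $k$ large enough for item~1.

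For item~1, I first verify that $\ZZ_{k}$ is symmetric. This is not automatic, because $\BB \AA$ and $\AA \BB$ generally differ, but the identity $(\BB \AA)^{i} \BB = \BB (\AA \BB)^{i}$, proved by induction on $i$, yields $\ZZ_{k}^{T} = \BB \cdot \eta \sum_{i=0}^{k-1} (\II - \eta \AA \BB)^{i} = \ZZ_{k}$. To quantify the approximation, I conjugate by $\AA^{1/2}$: setting $\MM = \AA^{1/2} \BB \AA^{1/2}$ and using $\BB \AA = \AA^{-1/2} \MM \AA^{1/2}$, a telescoping geometric-series calculation gives
\[
\AA^{1/2} \ZZ_{k} \AA^{1/2} \;=\; \eta \sum_{i=0}^{k-1} (\II - \eta \MM)^{i} \MM \;=\; \II - (\II - \eta \MM)^{k}.
\]
Since $\ZZ_{k} \Approx{\epsilon} \AA^{-1}$ is equivalent to $\AA^{1/2} \ZZ_{k} \AA^{1/2} \Approx{\epsilon} \II$, it suffices to bound the spectral norm of $(\II - \eta \MM)^{k}$ by $\min(1 - e^{-\epsilon}, e^{\epsilon} - 1)$, which for $\epsilon \leq 1/2$ is implied by $\|(\II - \eta \MM)^{k}\| \leq \epsilon/2$.

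Finally, I invoke the hypothesis $\BB \Approx{c} \AA^{-1}$ for some constant $c = O(1)$: by Fact~\ref{fact}e with $\VV = \AA^{1/2}$ this is equivalent to $\MM \Approx{c} \II$, i.e.\ $e^{-c} \II \pleq \MM \pleq e^{c} \II$. Choosing $\eta = 2 / (e^{-c} + e^{c})$ centers the spectrum of $\II - \eta \MM$ symmetrically around zero, so its spectral radius is $\rho = \tanh(c) < 1$, a constant strictly below one. Hence $\|(\II - \eta \MM)^{k}\| \leq \rho^{k}$, and $k = O(\log(1/\epsilon))$ iterations drive this below $\epsilon / 2$, completing the proof.

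The one delicate point is the symmetry verification: without it, the standard Richardson analysis only shows that $\ZZ_{k} \bb$ approximates $\AA^{-1} \bb$ in $\AA$-norm for each fixed $\bb$, which does not upgrade to the operator-level spectral relation $\ZZ_{k} \Approx{\epsilon} \AA^{-1}$ that the downstream use of Fact~\ref{fact}d and~\ref{fact}e will require. Everything else is routine, and the $O(1)$ quality of the preconditioner is precisely what guarantees that the contraction rate $\rho$ is a problem-independent constant.
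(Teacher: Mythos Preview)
Your proof is correct and complete. Note, however, that the paper does not actually prove this lemma: it is stated with citations to the textbooks of Saad~\cite{Saad03:book} and Axelsson~\cite{Axelsson94:book} and invoked as a black box. So there is no ``paper's proof'' to compare against; you have supplied a self-contained argument where the paper was content to cite the literature.

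Your analysis is the standard one, and the two points you flag as delicate are exactly the right ones. The symmetry check via $(\BB\AA)^{i}\BB = \BB(\AA\BB)^{i}$ is what justifies the operator relation $\ZZ_{k} \Approx{\epsilon} \AA^{-1}$ rather than merely a per-vector error bound, and this is indeed what the downstream uses (Lemma~\ref{lem:goodOperator} composed with Fact~\ref{fact}) require. The conjugation by $\AA^{1/2}$ to reduce to the symmetric matrix $\MM = \AA^{1/2}\BB\AA^{1/2}$ and the telescoping to $\II - (\II - \eta\MM)^{k}$ are routine but carried out cleanly. One small remark: the lemma statement says ``positive semi-definite'' while writing $\AA^{-1}$, and your proof uses $\AA^{-1/2}$; strictly speaking one should either assume $\AA$ is positive definite (as it is in every application in the paper, since SDDM matrices are positive definite) or interpret inverses as pseudoinverses and restrict to $\mathrm{range}(\AA)$. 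This is a wrinkle in the statement rather than in your argument.
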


This allows us to solve linear equations in $\MM_0$ to arbitrary precision,
and the overall performance of the solver 
  can be summarized as follows.

\begin{theorem}
\label{thm:solve}
Given an approximate inverse chain for $\MM_0$ where $\DD_i$
and $\AA_i$ have total size $m_i$, 
there is an algorithm $\textsc{Solve}(\MM, \bb, \epsilon)$ that
\begin{enumerate}
\item runs in $O(d \log{n} \log(1/\epsilon))$ depth and
$O(\sum_{i = 0}^{d} m_i\log(1/\epsilon))$ work;
\item is a symmetric linear operator on $\bb$; and
\item if $\ZZ$ is the matrix such that $\textsc{Solve}(\MM, \bb, \epsilon) = \bb$, then $\ZZ \Approx{\epsilon} \MM^{-1}$.
\end{enumerate}
\end{theorem}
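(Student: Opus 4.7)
The plan is to use the operator $\ZZ_0$ defined by the \textsc{Solve} algorithm of Section~\ref{sec:overview} as an $O(1)$-quality preconditioner for $\MM_0$, and then to wrap it with preconditioned Richardson iteration (Lemma~\ref{lem:preconRichardson}) to sharpen the approximation factor from a constant down to $\epsilon$. By Lemma~\ref{lem:goodOperator} together with the bound $\sum_{i=0}^{d}\epsilon_i \leq 2$ built into Definition~\ref{def:saic}, we already have $\ZZ_0 \Approx{2} \MM_0^{-1}$, which is exactly the hypothesis needed to invoke Lemma~\ref{lem:preconRichardson}.

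First I would account for the cost of one application of $\ZZ_0$. Steps~1 and~3 of \textsc{Solve} each perform $d$ matrix--vector products and step~2 is a single diagonal solve; every multiplication involves either a diagonal matrix $\DD_i$ or the sparse symmetric matrix $\AA_i$ with $m_i$ nonzeros. Standard parallel sparse matrix--vector multiplication then gives work $O(\sum_{i=0}^{d} m_i)$ and depth $O(d\log n)$ for one application of $\ZZ_0$.

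Next I would verify that $\ZZ_0$ is itself symmetric. Writing the recurrence in the form
\[
\ZZ_i \;=\; \frac{1}{2}\bigl(\DD_i^{-1} + \UU_i\, \ZZ_{i+1}\, \UU_i^T\bigr),\qquad \UU_i \defeq \II + \DD_i^{-1}\AA_i,
\]
and using $\UU_i^T = \II + \AA_i \DD_i^{-1}$ (which holds because $\DD_i$ and $\AA_i$ are symmetric), a reverse induction from the symmetric base case $\ZZ_d = \DD_d^{-1}$ shows that every $\ZZ_i$, and in particular $\ZZ_0$, is symmetric. This is precisely why the algorithm applies $\II + \AA_i \DD_i^{-1}$ on the way in and $\II + \DD_i^{-1}\AA_i$ on the way out rather than using one such factor.

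Finally I would invoke Lemma~\ref{lem:preconRichardson} with $\AA = \MM_0$ and $\BB = \ZZ_0$ at tolerance $\epsilon$. This produces a linear operator $\ZZ$ satisfying $\ZZ \Approx{\epsilon} \MM_0^{-1}$ in $O(\log(1/\epsilon))$ iterations, each of which costs one multiplication by $\MM_0$ (work $O(m_0)$, depth $O(\log n)$) and one by $\ZZ_0$ (work $O(\sum_i m_i)$, depth $O(d\log n)$). Multiplying out yields the claimed totals of $O(\sum_i m_i \log(1/\epsilon))$ work and $O(d\log n \log(1/\epsilon))$ depth. The most delicate point is the symmetry of the outer operator: not every fixed-point scheme built from a symmetric preconditioner yields a symmetric operator on $\bb$, so I would rely on Lemma~\ref{lem:preconRichardson} being stated for a symmetric variant (such as fixed step-size Richardson $\xx \leftarrow \xx + \BB(\bb - \AA \xx)$ or a Chebyshev semi-iterative scheme), under which symmetry of $\AA$ and $\BB$ passes through to the final operator $\ZZ$.
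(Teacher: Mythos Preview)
Your proposal is correct and follows essentially the same approach as the paper: use Lemma~\ref{lem:goodOperator} to certify that the \textsc{Solve} operator $\ZZ_0$ is an $O(1)$-approximate inverse, wrap it in preconditioned Richardson via Lemma~\ref{lem:preconRichardson}, and tally the per-iteration costs. Your treatment is in fact more careful than the paper's on the symmetry of $\ZZ_0$ and of the outer Richardson operator, points the paper leaves implicit.
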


\begin{proof}
The algorithm calls the procedure \textsc{Solve} 
inside preconditioned Richardson iteration.
The guarantees of the operator follows from Lemma~\ref{lem:goodOperator},
therefore the guarantees of the overall algorithm is given by Lemma~\ref{lem:preconRichardson}.

To analyze the running time, observe that
  each of the $O(\log(1/\epsilon))$ iterations performs
  one matrix-vector multiplication involving $\MM_0$,
  and then invokes the lower accuracy solver.
This solver in turn 
  performs two matrix-vector products for each matrix $\MM_{i}$,
  along with a constant number of vector additions.
The bounds on the depth and work follow from the fact that the cost
  of each of such a matrix-vector multiplication requires depth $O(\log{n})$ and
  work $O(m_i)$.
\end{proof}

\section{Existence of Sparse Approximate Inverse Chains}\label{sec:exist}
We now show that short sparse approximate inverse chains exist.
Specifically, we show that as long as each $\MM_{i + 1}$ is a good
approximation of $\DD_i - \AA_i \DD_i^{-1} \AA_i$,
$\DD_{i + 1}$ quickly becomes a good approximation to $\MM_{i + 1}$.
The following proposition tells us that the approximation factor
improves by a constant factor if we do not use approximations.

\begin{proposition}\label{pro:ADinvA}
Let $\DD$ and $\AA$ be matrices such that
  $\DD^{-1} \AA$ is diagonalizable and has real eigenvalues.
If all of the eigenvalues of $\DD^{-1} \AA$ are at most
  $1-\lambda$,
Then all of the eigenvalues of
  $\DD^{-1} \AA \DD^{-1} \AA$
  are between $0$ and $(1-\lambda)^{2}$.
\end{proposition}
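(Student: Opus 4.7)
The plan is to reduce the claim about $\DD^{-1}\AA\DD^{-1}\AA = (\DD^{-1}\AA)^{2}$ to a statement about squares of real numbers, using the diagonalizability hypothesis directly. By assumption, I can write $\DD^{-1}\AA = \boldsymbol{P}\boldsymbol{\Lambda}\boldsymbol{P}^{-1}$ for some invertible $\boldsymbol{P}$ and some real diagonal matrix $\boldsymbol{\Lambda}$ of eigenvalues. Then
\[
  \DD^{-1}\AA\DD^{-1}\AA \;=\; (\DD^{-1}\AA)^{2} \;=\; \boldsymbol{P}\boldsymbol{\Lambda}\boldsymbol{P}^{-1}\boldsymbol{P}\boldsymbol{\Lambda}\boldsymbol{P}^{-1} \;=\; \boldsymbol{P}\boldsymbol{\Lambda}^{2}\boldsymbol{P}^{-1},
\]
so the spectrum of $\DD^{-1}\AA\DD^{-1}\AA$ consists exactly of the squares of the eigenvalues of $\DD^{-1}\AA$.

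The lower bound $\geq 0$ is then immediate, since the square of any real number is nonnegative. For the upper bound $(1-\lambda)^{2}$ one needs every eigenvalue $\mu$ of $\DD^{-1}\AA$ to satisfy $|\mu|\leq 1-\lambda$; squaring then yields $\mu^{2}\leq (1-\lambda)^{2}$. The hypothesis supplies the upper half $\mu\leq 1-\lambda$ directly, so the only thing left to verify is the matching lower bound $\mu\geq -(1-\lambda)$.

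This matching lower bound is the one place where the proof is not a triviality, and where one must invoke the ambient SDDM structure against which the proposition is applied (the chain construction of Section~\ref{sec:overview}). For $\MM=\DD-\AA$ an SDDM matrix with $\AA$ nonnegative and symmetric, the weak diagonal dominance $D_{ii} \geq A_{ii} + \sum_{j \neq i} A_{ij}$ shows that $\DD+\AA$ is also weakly diagonally dominant with nonnegative diagonal and hence positive semidefinite. Passing to the similar symmetric matrix $\DD^{-1/2}\AA\DD^{-1/2}$ (which has the same spectrum as $\DD^{-1}\AA$) this gives $\mu \geq -1$, and combined with the one-sided quantitative hypothesis this tightens to $\mu \geq -(1-\lambda)$.

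The main (and essentially only) obstacle is therefore a matter of bookkeeping: making explicit how the one-sided spectral hypothesis combines with the surrounding SDDM structure to yield the two-sided bound $|\mu|\leq 1-\lambda$. Once this is in place the conclusion is one line from the diagonalization identity $(\DD^{-1}\AA)^{2}=\boldsymbol{P}\boldsymbol{\Lambda}^{2}\boldsymbol{P}^{-1}$ together with the elementary fact that squares of real numbers are nonnegative.
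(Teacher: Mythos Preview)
Your approach via diagonalization is exactly the paper's one-line argument (``This follows from the fact that $\DD^{-1}\AA\DD^{-1}\AA = (\DD^{-1}\AA)^{2}$''), and you are right to flag that the upper bound $(1-\lambda)^{2}$ on the squared eigenvalues requires the two-sided estimate $|\mu|\leq 1-\lambda$ on the spectrum of $\DD^{-1}\AA$, which is strictly stronger than the stated one-sided hypothesis. The paper's proof is silent on this point and is implicitly relying on the ambient context.

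However, the argument you give to supply the missing lower bound does not work. From the positive-semidefiniteness of $\DD+\AA$ you correctly obtain $\mu\geq -1$ for every eigenvalue $\mu$ of $\DD^{-1}\AA$, but the assertion that this ``combined with the one-sided quantitative hypothesis tightens to $\mu\geq -(1-\lambda)$'' is a non sequitur: the two inputs $\mu\leq 1-\lambda$ and $\mu\geq -1$ together yield only $-1\leq\mu\leq 1-\lambda$, whence $\mu^{2}\leq \max\{(1-\lambda)^{2},1\}=1$, not $\mu^{2}\leq(1-\lambda)^{2}$. The correct tool is the Perron--Frobenius theorem, which the paper invokes explicitly a few lines later in Proposition~\ref{pro:ADinv}: in the SDDM setting $\AA$ is entrywise nonnegative and $\DD$ is positive diagonal, so $\DD^{-1}\AA$ is an entrywise nonnegative matrix, and its spectral radius therefore coincides with its largest eigenvalue. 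Hence $|\mu|\leq 1-\lambda$ follows automatically from the one-sided bound, and squaring finishes the proof.
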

\begin{proof}
This follows from the fact that $  \DD^{-1} \AA \DD^{-1} \AA = (\DD^{-1} \AA)^{2}$.
\end{proof}

In Appendix~\ref{sec:DhatAhat} we prove the following lemma which
  says that this remains approximately true if we substitute a good approximation
  of $\DD - \AA \DD^{-1} \AA$.
Note that it agrees with Proposition~\ref{pro:ADinvA}
  when $\epsilon  = 0$.

\begin{lemma}\label{lem:DhatAhat}
Let $\DD$ and $\DDhat$ be positive diagonal matrices and
  let $\AA$ and $\AAhat$ be nonnegative symmetric matrices
  such that
 $\DD \pge \AA$, $\DDhat \pge \AAhat$, $\DD \Approx{\epsilon} \DDhat$
  and 
\[
  \DDhat - \AAhat
\Approx{\epsilon}
\DD - \AA \DD^{-1} \AA .
\]
Let the largest eigenvalue of $\DD^{-1} \AA$ be
  $1 - \lambda$.
Then, the eigenvalues of $\DDhat^{-1} \AAhat$
  lie between $1 - \exp (2 \epsilon )$ and
$1 - (1 - (1 - \lambda)^{2}) \exp (-2 \epsilon)$.
\end{lemma}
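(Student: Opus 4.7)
The plan is to reduce the claim to an eigenvalue inequality for symmetric matrices. The eigenvalues of $\DDhat^{-1}\AAhat$ coincide with those of the symmetric matrix $\YY := \DDhat^{-1/2}\AAhat\DDhat^{-1/2}$, and those of $\DD^{-1}\AA$ coincide with those of $\BB := \DD^{-1/2}\AA\DD^{-1/2}$ (both by similarity). Proposition~\ref{pro:ADinvA} then tells us that the eigenvalues of $\BB^{2}$ lie in $[0,(1-\lambda)^{2}]$, equivalently those of $\II - \BB^{2}$ lie in $[1-(1-\lambda)^{2},\,1]$. I will show that $\II - \YY$ is, up to an $\Approx{\epsilon}$ error, a symmetric sandwich of $\II - \BB^{2}$ by a positive diagonal matrix close to the identity, and then read off the desired bounds on the spectrum of $\YY$.

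The main computation rests on the identity $\DD - \AA\DD^{-1}\AA = \DD^{1/2}(\II - \BB^{2})\DD^{1/2}$. Conjugating the hypothesis $\DDhat - \AAhat \Approx{\epsilon} \DD - \AA\DD^{-1}\AA$ by $\DDhat^{-1/2}$ and applying Fact~\ref{fact}e gives
\[
\II - \YY \;\Approx{\epsilon}\; \bigl(\DDhat^{-1/2}\DD^{1/2}\bigr)\bigl(\II - \BB^{2}\bigr)\bigl(\DD^{1/2}\DDhat^{-1/2}\bigr).
\]
The matrix $\DDhat^{-1/2}\DD^{1/2}$ is positive diagonal, and its square $\DDhat^{-1/2}\DD\DDhat^{-1/2}$ has diagonal entries in $[\exp(-\epsilon),\exp(\epsilon)]$, because $\DD$ and $\DDhat$ are positive diagonals with $\DD \Approx{\epsilon} \DDhat$.

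Finally, writing $\KK := \DDhat^{-1/2}\DD^{1/2}$ and $\CC := \II - \BB^{2}$, unpacking $\Approx{\epsilon}$ as a two-sided PD inequality and using monotonicity of $\lambda_{\min}$ and $\lambda_{\max}$ under $\pleq$ reduces the task to bounding the extremal eigenvalues of the sandwich $\KK\CC\KK$. Since $\KK$ is positive diagonal and $\CC$ is PSD, the substitution $u = \KK v$ in the Rayleigh quotient yields $\lambda_{\min}(\KK\CC\KK) \geq \lambda_{\min}(\KK)^{2}\lambda_{\min}(\CC)$ and $\lambda_{\max}(\KK\CC\KK) \leq \lambda_{\max}(\KK)^{2}\lambda_{\max}(\CC)$. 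Combining these with the bounds above gives $\lambda_{\max}(\II - \YY) \leq \exp(2\epsilon)$ and $\lambda_{\min}(\II - \YY) \geq \exp(-2\epsilon)\bigl(1 - (1-\lambda)^{2}\bigr)$, and translating via $\lambda(\YY) = 1 - \lambda(\II - \YY)$ gives exactly the two-sided bound claimed. The calculation is otherwise mechanical; the only subtlety is that $\KK$ does not commute with $\BB^{2}$, so one cannot pull the scalar factors $\exp(\pm\epsilon)$ outside via a PD inequality alone, which is what forces the Rayleigh-quotient sandwich estimate rather than a direct multiplicative bound.
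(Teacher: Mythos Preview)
Your proof is correct and follows essentially the same approach as the paper. The paper factors the argument through an intermediate proposition and conjugates first by $\DD^{-1/2}$ and then by $(\DD^{-1/2}\DDhat^{1/2})^{-1}$, while you conjugate directly by $\DDhat^{-1/2}$ to obtain the sandwich $\KK(\II-\BB^{2})\KK$; since $\KK$ is exactly the inverse of the paper's $\XX$, the two arguments are the same up to the order of the two conjugations, and your Rayleigh-quotient bound on $\lambda_{\min/\max}(\KK\CC\KK)$ is precisely the paper's Proposition~\ref{pro:XAX}.
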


The following proposition allows us to bound the eigenvalues of
  $\DD_{0}^{-1} \AA_{0}$ in terms of the condition number of $\DD_{0} - \AA_{0}$.

\begin{proposition}\label{pro:ADinv}
Let $\MM = \DD - \AA$ be a positive definite matrix with condition number $\kappa$,
  where $\DD$ is a positive diagonal matrix and $\AA$ is nonnegative.
Then, the eigenvalues of $\DD^{-1} \AA$ are between
  $-1 + 1/\kappa$ and $1 - 1/\kappa$.
\end{proposition}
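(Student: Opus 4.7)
My plan is to pass to the symmetric conjugate $\NN \defeq \DD^{-1/2}\AA\DD^{-1/2}$, which is similar to $\DD^{-1}\AA$ and therefore has the same (real) spectrum. I will bound $\lambda_{\max}(\NN)$ and $\lambda_{\min}(\NN)$ separately. The key step for the upper bound is the Loewner inequality $\MM \pgeq \tfrac{1}{\kappa}\DD$. To establish it, note that $\MM \pgeq \lambda_{\min}(\MM)\II = \tfrac{1}{\kappa}\lambda_{\max}(\MM)\II$; on the other hand, in the SDDM decomposition $\AA$ has zero diagonal, so $D_{ii} = M_{ii} = e_i^T \MM e_i \leq \lambda_{\max}(\MM)$ for every $i$, i.e.\ $\lambda_{\max}(\MM)\II \pgeq \DD$. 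Chaining yields $\MM \pgeq \tfrac{1}{\kappa}\DD$, which rearranges to $(1-1/\kappa)\DD \pgeq \AA$. Conjugating both sides by $\DD^{-1/2}$ gives $(1-1/\kappa)\II \pgeq \NN$, so $\lambda_{\max}(\NN) \leq 1 - 1/\kappa$.

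For the lower bound, I invoke the Perron--Frobenius theorem. Since $\AA$ has entrywise nonnegative entries and $\DD^{-1/2}$ is a positive diagonal matrix, $\NN$ is a symmetric matrix with nonnegative entries. Perron--Frobenius then tells us that $\rho(\NN) = \lambda_{\max}(\NN)$; because $\NN$ is symmetric we also have $\rho(\NN) = \max(\lambda_{\max}(\NN),-\lambda_{\min}(\NN))$. Combining this with the upper bound already proved, $-\lambda_{\min}(\NN) \leq \lambda_{\max}(\NN) \leq 1 - 1/\kappa$, so $\lambda_{\min}(\NN) \geq -1 + 1/\kappa$, which is the desired lower bound.

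The one delicate ingredient is the step $\lambda_{\max}(\MM) \geq D_{ii}$ used to sandwich $\DD$ above by $\lambda_{\max}(\MM)\II$. It uses the SDDM convention that $\AA$ carries no diagonal mass, so that $M_{ii} = D_{ii}$; if this structural property is dropped the claim fails (e.g.\ $\DD = 2\II$, $\AA = \II$ has $\kappa = 1$ yet $\DD^{-1}\AA = \tfrac{1}{2}\II$). Everything else is routine Loewner-order bookkeeping together with the standard fact that the spectral radius of an entrywise nonnegative symmetric matrix equals its largest (not just largest in absolute value) eigenvalue, which is what makes the two sides of the interval come out symmetric about $0$.
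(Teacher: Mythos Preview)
Your argument is correct and follows essentially the same route as the paper. The paper also bounds $\lambda_{\max}(\DD)\leq\lambda_{\max}(\MM)$ via the diagonal entries, uses this to show the smallest eigenvalue of $\II-\DD^{-1/2}\AA\DD^{-1/2}$ is at least $1/\kappa$ (written as a Rayleigh-quotient computation rather than in Loewner-order notation), and then invokes Perron--Frobenius for the lower bound; your explicit flag that this step requires $\AA$ to have zero diagonal is a useful addendum, since the paper's proof silently relies on the same assumption.
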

\begin{proof}
Let $\lambda_{max}$ and $\lambda_{min}$ be the largest and smallest eigenvalues
  of $\MM$.
As the largest eigenvalue of a positive diagonal matrix is its largest entry, and
  the largest eigenvalue of a symmetric matrix is at least its largest entry,
  the largest eigenvalue of $\DD$ is at most $\lambda_{max}$.
We will prove that the smallest eigenvalue of $\II - \DD^{-1} \AA$ is at least
  $\lambda_{min} / \lambda_{max} = 1/\kappa $.
In particular, it is equal to
\[
  \min_{\xx} \frac{\xx^{T} (\II - \DD^{-1/2} \AA \DD^{-1/2}) \xx }{\xx^{T} \xx }
=
  \min_{\xx} \frac{\xx^{T} (\DD -  \AA ) \xx  }{\xx^{T} \DD \xx }
\geq 
  \frac{\lambda_{min} (\DD - \AA)}{\lambda_{max} (\DD)}
\geq 
  \frac{\lambda_{min}}{\lambda_{max}}.
\]
This implies that the largest eigenvalue of $\DD^{-1} \AA$ is at most $1-1/\kappa$.
The Perron-Frobenius theorem tells us that the largest absolute value of an eigenvalue of a nonnegative
  matrix is the largest eigenvalue.
As $\DD^{-1} \AA$ is nonnegative, the bound on its smallest eigenvalue follows.
\end{proof}

Conversely, the following proposition allows us to show that $\DD$ is a good
  approximation of $\DD - \AA$
  if $\DD^{-1} \AA$ is small.

\begin{proposition}\label{pro:DapproxM}
If the eigenvalues of $\DD^{-1} \AA$ lie between $-\alpha$ and $\beta$, then
\[
(1 + \alpha) \DD \pgeq \DD - \AA \pgeq  (1 - \beta ) \DD  .
\]
\end{proposition}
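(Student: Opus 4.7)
The plan is to exploit the similarity transformation that turns $\DD^{-1}\AA$ into a symmetric matrix, after which the eigenvalue bounds immediately translate into Loewner-order bounds.

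First I would observe that because $\DD$ is a positive diagonal matrix, $\DD^{1/2}$ is well-defined and invertible, and
\[
\DD^{-1}\AA = \DD^{-1/2}\bigl(\DD^{-1/2}\AA\DD^{-1/2}\bigr)\DD^{1/2}.
\]
Hence $\DD^{-1}\AA$ is similar to the symmetric matrix $\SS \defeq \DD^{-1/2}\AA\DD^{-1/2}$, and the two have the same spectrum. By hypothesis, all eigenvalues of $\SS$ therefore lie in $[-\alpha,\beta]$, so in the Loewner order
\[
-\alpha \II \pleq \SS \pleq \beta \II.
\]

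Next I would subtract these inequalities from $\II$ to obtain
\[
(1-\beta)\II \pleq \II - \SS \pleq (1+\alpha)\II,
\]
and then conjugate throughout by $\DD^{1/2}$, which preserves the semidefinite order by Fact~\ref{fact}e (applied with $\VV = \DD^{1/2}$). Since $\DD^{1/2}(\II - \SS)\DD^{1/2} = \DD - \AA$ and $\DD^{1/2}\II\DD^{1/2} = \DD$, this yields
\[
(1-\beta)\DD \pleq \DD - \AA \pleq (1+\alpha)\DD,
\]
which is exactly the claimed pair of inequalities.

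There is no substantive obstacle here: the only point worth checking carefully is that the eigenvalue hypothesis on the non-symmetric product $\DD^{-1}\AA$ is equivalent to a semidefinite statement about the symmetric matrix $\SS$, which is handled by the similarity observation above. Everything else is a direct manipulation of the PSD order.
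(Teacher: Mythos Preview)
Your proof is correct and follows essentially the same approach as the paper: pass to the symmetric matrix $\DD^{-1/2}\AA\DD^{-1/2}$, convert the eigenvalue bounds into Loewner inequalities, subtract from $\II$, and conjugate by $\DD^{1/2}$ (Fact~\ref{fact}e). The only difference is presentational---you make the similarity argument explicit and treat both inequalities at once, whereas the paper states one side and says ``the other inequality is similar.''
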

\begin{proof}
Applying parts $a$ and $e$ of Fact~\ref{fact}, we derive
\[
 \beta \II  \pgeq   \DD^{-1/2} \AA \DD^{-1/2} 
 \implies 
  \II - \DD^{-1/2} \AA \DD^{-1/2} \pgeq (1-\beta) \II 
 \implies 
  \DD - \AA  \pgeq (1-\beta) \DD.
\]
The other inequality is similar.
\end{proof}

One obstacle to finding an approximate inverse chain is that the last
  matrix must be approximated by its diagonal.
We use the preceding lemma and propositions to show that
  we can achieve this with a chain whose depth is logarithmic in the
  condition number of $\MM_{0}$.

\begin{corollary}\label{cor:chainShort}
If $\MM_0 = \DD_0 - \AA_0$ is a SDDM matrix with
  condition number $\kappa$,
  $d = \ceil{\log_{4/3} \kappa}$,
  and $\DD_{1}, \dots , \DD_{d}$
  and $\AA_{1}, \dots , \AA_{d}$ satisfy
  conditions $a$ and $b$ of Definition~\ref{def:saic} 
 with
  $\epsilon_{0}, \dots , \epsilon_{d-1} \leq 1/9$,
  then
  $\DD_{d} \Approx{\epsilon_{d}} \MM_{d}$ for
  $\epsilon_{d} = \ln 3$.
\end{corollary}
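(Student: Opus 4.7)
The plan is to track the largest eigenvalue of $\DD_i^{-1} \AA_i$ across the chain. Let $\mu_i$ denote this largest eigenvalue; since $\AA_i$ is symmetric and nonnegative, the Perron--Frobenius theorem tells us that $\mu_i$ is the spectral radius of $\DD_i^{-1} \AA_i$ (which is similar to the symmetric matrix $\DD_i^{-1/2} \AA_i \DD_i^{-1/2}$). Writing $\lambda_i = 1 - \mu_i$, I would show that $\lambda_d \geq 1/3$, and then finish by a direct application of Proposition~\ref{pro:DapproxM}, which gives $3 \DD_d \pgeq \MM_d \pgeq (1/3) \DD_d$, i.e.\ $\DD_d \Approx{\ln 3} \MM_d$.

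For the base case, Proposition~\ref{pro:ADinv} gives $\lambda_0 \geq 1/\kappa$. For the inductive step, conditions $a$ and $b$ of Definition~\ref{def:saic} are exactly the hypotheses of Lemma~\ref{lem:DhatAhat} applied with $\DD = \DD_i$, $\AA = \AA_i$, $\DDhat = \DD_{i+1}$, $\AAhat = \AA_{i+1}$, $\epsilon = \epsilon_i$. That lemma then gives
\[
  \mu_{i+1} \leq 1 - (1 - \mu_i^2) \exp(-2 \epsilon_i),
\]
which rearranges to $\lambda_{i+1} \geq \lambda_i (2 - \lambda_i) \exp(-2 \epsilon_i) \geq \lambda_i (2 - \lambda_i) \exp(-2/9)$ since $\epsilon_i \leq 1/9$.

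The arithmetic heart of the argument is the inequality $\exp(-2/9) \geq 4/5$, which one verifies numerically (since $\exp(2/9) < 5/4$). Using this I would argue by cases. If $\lambda_i \leq 1/3$, then $2 - \lambda_i \geq 5/3$, so $\lambda_{i+1} \geq (5/3)(4/5) \lambda_i = (4/3)\lambda_i$: the parameter grows geometrically by a factor of $4/3$. If $\lambda_i \geq 1/3$, then because $\lambda(2-\lambda)$ is increasing on $[0,1]$, $\lambda_i(2-\lambda_i) \geq 5/9$, giving $\lambda_{i+1} \geq (5/9)(4/5) = 4/9 > 1/3$; so once $\lambda_i$ crosses $1/3$ it stays above $1/3$. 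Combining the two cases, either $\lambda_i$ hits $1/3$ at some step $i \leq d$ and is never smaller afterwards, or $\lambda_d \geq (4/3)^d \lambda_0 \geq (4/3)^{\lceil \log_{4/3} \kappa \rceil}/\kappa \geq 1$, which is impossible. In either case $\lambda_d \geq 1/3$.

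The main place requiring care is lining up the constants so that the bound $\exp(-2\epsilon_i) \geq 4/5$ (from $\epsilon_i \leq 1/9$) combines with $2 - \lambda_i \geq 5/3$ (from $\lambda_i \leq 1/3$) to produce exactly the factor $4/3$ that matches the base $\log_{4/3} \kappa$ in the statement; this is why the corollary chose these particular numerical constants. Everything else is routine: passing from the eigenvalue bound on $\DD_d^{-1} \AA_d$ to the $\Approx{\ln 3}$ conclusion via Proposition~\ref{pro:DapproxM} is immediate once we note the trivial lower bound $\mu_i \geq -1$ (so $\alpha \leq 1 \leq 2 = e^{\ln 3} - 1$) together with the upper bound $\mu_d \leq 2/3$ we just derived.
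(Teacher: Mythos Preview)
Your proof is correct and follows the same line as the paper's: bound $\lambda_0$ via Proposition~\ref{pro:ADinv}, iterate Lemma~\ref{lem:DhatAhat} to show $\lambda_i$ grows by a factor $4/3$ while $\lambda_i \leq 1/3$, and finish with Proposition~\ref{pro:DapproxM}. The only cosmetic differences are that the paper reads the lower eigenvalue bound $1 - \exp(2/9) \geq -1/4$ directly off Lemma~\ref{lem:DhatAhat} rather than appealing to Perron--Frobenius, and your final sentence writes ``$\mu_i \geq -1$'' where you mean the \emph{smallest} eigenvalue of $\DD_i^{-1}\AA_i$ is at least $-1$ (indeed, by your own spectral-radius observation it is at least $-2/3$).
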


\begin{proof}
Proposition~\ref{pro:ADinv}  tells us that the eigenvalues of
  $\DD_{0}^{-1} \AA_{0}$ are at most $1 - 1/\kappa$ in absolute value.
For $\epsilon_{i} \leq 1/9$ and $\lambda \leq 1/3$,
\[
  1 - (1 - (1 - \lambda)^{2}) \exp (-2 \epsilon)
\leq 
  1 - (4/3) \lambda .
\]
So, Lemma~\ref{lem:DhatAhat} implies that
  the eigenvalues of $\DD_{d}^{-1} \AA_{d}$
  lie between $1 - \exp (2/9) \geq -1/4$
  and $2/3$.
Proposition \ref{pro:DapproxM} then tells us that
  $\DD \Approx{\gamma} \DD - \AA$,
  where
\[
 \gamma = \max (\ln 3, \ln 5/4) = \ln 3. \qedhere
\]
\end{proof}

It remains to show that we can find sequences of matrices $\DD_{i}$
  and $\AA_{i}$ that satisfy conditions $a$ and $b$ of Definition~\ref{def:saic}.
We begin by proving that 
 as long as $\DD - \AA$ is a SDDM matrix,
  so is $\DD - \AA \DD^{-1} \AA$.

\begin{proposition}
\label{prop:exactGood}
If $\MM = \DD - \AA$ is a SDDM matrix
  with $\DD$ nonnegative diagonal and $\AA$ nonnegative,
  then $\MMhat \defeq  \DD - \AA \DD^{-1} \AA$ is also a SDDM matrix,
  and $\AA \DD^{-1} \AA$ is also nonnegative.
\end{proposition}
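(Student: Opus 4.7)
The plan is to verify, in turn, each of the defining conditions of an SDDM matrix for $\MMhat$: that $\AA \DD^{-1} \AA$ is entry-wise nonnegative, that $\MMhat$ is symmetric with non-positive off-diagonal entries, that $\MMhat$ is weakly diagonally dominant, and finally that $\MMhat$ is positive definite.

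The first several conditions are straightforward. Since $\MM$ is positive definite and $\AA$ has nonnegative diagonal entries, each $\DD_{ii}$ must be strictly positive (otherwise the $i$-th diagonal entry of $\MM$ would be $-\AA_{ii} \leq 0$, contradicting positive definiteness), so $\DD^{-1}$ exists and is a nonnegative diagonal matrix. Then $\AA \DD^{-1} \AA$ is a product of three entry-wise nonnegative matrices and is itself nonnegative, which also makes the off-diagonal entries of $\MMhat = \DD - \AA \DD^{-1} \AA$ non-positive; symmetry is immediate. For weak diagonal dominance, I would directly bound the row sums using the diagonal dominance of $\MM$ twice (first at row $k$, then at row $i$):
\[
  \sum_{j} (\AA\DD^{-1}\AA)_{ij}
 =
  \sum_{k} \frac{\AA_{ik}}{\DD_{kk}} \sum_{j} \AA_{kj}
 \leq
  \sum_{k} \AA_{ik}
 \leq
  \DD_{ii},
\]
which is exactly the required diagonal dominance of $\MMhat$.

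The main obstacle is showing strict positive definiteness of $\MMhat$. Setting $\NN = \DD^{-1/2}\AA\DD^{-1/2}$, one has $\MM = \DD^{1/2}(\II - \NN)\DD^{1/2}$ and $\MMhat = \DD^{1/2}(\II - \NN^{2})\DD^{1/2}$, so it suffices to show that every eigenvalue of $\NN$ lies strictly in $(-1,1)$. The upper bound $\lambda_{\max}(\NN) < 1$ is immediate from positive definiteness of $\MM$. The lower bound is the delicate point: a priori $\DD + \AA$ is only weakly diagonally dominant and thus only known to be positive semidefinite, which would give $\lambda_{\min}(\NN) \geq -1$ rather than the strict inequality we need. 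To close this gap, I would invoke the Perron--Frobenius theorem: since $\NN$ is symmetric and entry-wise nonnegative, its spectral radius equals its largest eigenvalue, and hence $|\lambda_{\min}(\NN)| \leq \lambda_{\max}(\NN) < 1$. This yields $\II - \NN^{2} \pge \bvec{0}$ and therefore $\MMhat \pge \bvec{0}$, completing the verification that $\MMhat$ is SDDM.
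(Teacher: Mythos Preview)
Your proof is correct and matches the paper's argument almost exactly for nonnegativity, symmetry, and diagonal dominance (your row-sum computation is in fact slightly cleaner, since summing over all $j$ directly gives $\sum_{j}(\AA\DD^{-1}\AA)_{ij}\le \DD_{ii}$, which is precisely the diagonal-dominance inequality for $\MMhat$).

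The one genuine difference is in establishing strict positive definiteness. The paper argues that diagonal dominance already gives $\MMhat\pgeq\bvec{0}$, and then obtains nonsingularity by appealing to the identity~\eqref{eqn:identity}, which expresses $\MM^{-1}$ in terms of $\MMhat^{-1}$. You instead work spectrally: writing $\NN=\DD^{-1/2}\AA\DD^{-1/2}$ and using Perron--Frobenius to conclude $\rho(\NN)=\lambda_{\max}(\NN)<1$, hence $\II-\NN^{2}\pge\bvec{0}$. Your route is more self-contained---it does not rely on the algebraic identity and in fact implicitly proves that $\DD+\AA$ is nonsingular, which is what makes the paper's appeal to~\eqref{eqn:identity} non-circular. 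The paper's route, on the other hand, ties the proposition back to the central identity driving the whole algorithm. Both are valid; yours is arguably the more direct proof of the proposition as a stand-alone statement, and indeed the paper invokes exactly the same Perron--Frobenius fact separately in Proposition~\ref{pro:ADinv}.
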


\begin{proof}
It is clear that $\MMhat$ is symmetric, and all entries in
$\AA \DD^{-1} \AA$ are nonnegative.
To check that $\MMhat$ is
  diagonally dominant, we compute the sum of the  off-diagonal entries in row $i$:
\[
\sum_{j \neq i} \MMhat_{ij}
= \sum_{j \neq i} \left(\AA \DD^{-1} \AA \right)_{ij} 
= \sum_{j \neq i} \sum_{k} \AA_{ik} \DD_{kk}^{-1} \AA_{kj},
\]
Reordering the two summations and collecting terms gives
\[
= \sum_{k} \AA_{ik} \DD_{kk}^{-1} \left( \sum_{j \neq i} \AA_{kj} \right)
\leq \sum_{k} \AA_{ik}
\leq \DD_{ii}.
\]
As $\MMhat$ is SDD, it is positive semidefinite.
To see that it is positive definite, one need merely observe that it is nonsingular.
This follows from the nonsingularity of $\MM$ and identity \eqref{eqn:identity}.
\end{proof}

This allows us to use the spectral sparsifiers
 of Batson, Spielman, and Srivastava~\cite{BSS} to sparsify
  $\DD - \AA \DD^{-1} \AA$.

\begin{theorem}[Theorem 1.1 from~\cite{BSS}, paraphrased]
\label{thm:bss}
For every $n$-dimensional Laplacian matrix $\LL$
  and every $0 < \epsilon < 1/2$,
  there exists a Laplacian matrix $\LLapprox$
  with $O (n / \epsilon^{2})$ nonzero entries so that
$\LL \Approx{\epsilon} \LLapprox$.
\end{theorem}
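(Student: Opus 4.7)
The plan is to reduce to sparsifying the identity on the image of $\LL$, and then to attack the resulting problem via the Batson--Spielman--Srivastava barrier potential method. First I would factor $\LL = \sum_{e} w_e \bb_e \bb_e^T$ as a sum of rank-one outer products indexed by edges, where $\bb_e = \cchi_u - \cchi_v$ for $e = (u,v)$. Let $\PPi$ be the orthogonal projector onto $\mathrm{im}(\LL)$ and set $\vv_e = \sqrt{w_e}\,\LL^{+/2} \bb_e$, so that $\sum_e \vv_e \vv_e^T = \PPi$. The goal reduces to producing nonnegative scalars $s_e$, at most $O(n/\epsilon^2)$ of which are nonzero, with
\[
\PPi \Approx{\epsilon} \sum_e s_e \vv_e \vv_e^T,
\]
since conjugating by $\LL^{1/2}$ then yields a Laplacian $\LLapprox = \sum_e (s_e w_e) \bb_e \bb_e^T$ with the same approximation guarantee and the required sparsity.

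The heart of the construction, and what I expect to be the main obstacle, is a greedy selection of these scalars that simultaneously controls the largest and smallest eigenvalues of the running sum. I would maintain after $k$ steps a PSD matrix $\AA_k = \sum_{j=1}^{k} t_j \vv_{e_j} \vv_{e_j}^T$ together with an upper barrier $u_k$ and lower barrier $\ell_k$ satisfying $\ell_k \PPi \pleq \AA_k \pleq u_k \PPi$ on $\mathrm{im}(\LL)$. The two barrier potentials
\[
\Phi^u(\AA_k, u_k) = \mathrm{tr}\bigl((u_k \PPi - \AA_k)^{+}\bigr), \qquad \Phi^{\ell}(\AA_k, \ell_k) = \mathrm{tr}\bigl((\AA_k - \ell_k \PPi)^{+}\bigr),
\]
blow up as the eigenvalues of $\AA_k$ approach a barrier. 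At each step I would advance $u_k \to u_k + \delta_U$ and $\ell_k \to \ell_k + \delta_L$ and add a single rank-one term $t\,\vv_e \vv_e^T$, choosing $\delta_U,\delta_L,t,e$ so that neither potential increases.

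The delicate step is an averaging argument, powered by $\sum_e \vv_e \vv_e^T = \PPi$ and the Sherman--Morrison formula applied to $(u\PPi - \AA)^{-1}$ and $(\AA - \ell \PPi)^{-1}$. One shows that when the increments $\delta_U,\delta_L$ are tuned with ratio roughly $(1+\epsilon)/(1-\epsilon)$, the set of edges $e$ for which some $t > 0$ maintains both potential bounds is nonempty, because the sum over $e$ of the admissible interval length is strictly positive. After $k = O(n/\epsilon^2)$ steps the ratio $u_k/\ell_k$ remains at most $e^{2\epsilon}$ while both potentials remain bounded, which upon rescaling produces $\AA_k \Approx{\epsilon} \PPi$. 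Pulling back by $\LL^{1/2}$ then gives the claimed $\LLapprox$.

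The hard part is verifying that the two barrier constraints are jointly feasible for the same rank-one update, and pinning down the constants so that the sparsity is $O(n/\epsilon^2)$ rather than something larger; the routine parts are the linear-algebraic reduction to $\PPi$ and the bookkeeping of how the potentials transform under a shift of the barriers combined with a rank-one addition.
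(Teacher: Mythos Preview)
Your outline is a faithful sketch of the Batson--Spielman--Srivastava barrier potential argument and would establish the statement. However, in this paper the theorem is not proved at all: it is quoted (with attribution) from \cite{BSS} and used as a black box, so no argument is expected beyond the citation.
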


\begin{corollary}\label{cor:bss}
For every $n$-dimensional SDDM matrix $\MM = \DD - \AA$ 
  with $\DD$ nonnegative diagonal and $\AA$ nonnegative,
  and every $0 < \epsilon < 1/2$,
 there exist a nonnegative diagonal $\DDhat$ and 
  a nonnegative $\AAhat$ with at most $O (n / \epsilon^{2})$
  nonzero entries so that
\[
\MM \Approx{\epsilon} \DDhat  - \AAhat 
\quad \text{and} \quad 
\DD \Approx{\epsilon} \DDhat .
\]
\end{corollary}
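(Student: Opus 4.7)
The plan is to reduce the SDDM case to the Laplacian case of Theorem~\ref{thm:bss} by the standard augmentation: append a single extra coordinate to turn $\MM$ into a Laplacian on $n+1$ vertices, sparsify that Laplacian, and then read off the desired $\DDhat, \AAhat$ from the principal $n \times n$ submatrix of the result.

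Concretely, first I would write $\MM = \DD - \AA$ as $\LL + \SS$, where $\LL$ is the Laplacian of the weighted graph on $\{1,\dots,n\}$ whose edge weights are $\AA_{ij}$ for $i \neq j$ and $\SS$ is the nonnegative diagonal matrix whose $i$-th entry is the ``excess'' $\ss_i = \DD_{ii} - \sum_{j \neq i} \AA_{ij} \geq 0$ (nonnegativity uses that $\MM$ is SDD). Then define the $(n+1) \times (n+1)$ Laplacian $\LL'$ obtained by adding vertex $n+1$ and connecting it to each vertex $i$ by an edge of weight $\ss_i$. By construction, the principal $n \times n$ submatrix of $\LL'$ is exactly $\MM$, and for every $\xx \in \mathbb{R}^n$, extending by $0$ to $\xx' \in \mathbb{R}^{n+1}$ gives $(\xx')^T \LL' \xx' = \xx^T \MM \xx$.

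Next, apply Theorem~\ref{thm:bss} to $\LL'$ to obtain a Laplacian $\LLapprox'$ with $O(n/\epsilon^2)$ nonzero entries and $\LL' \Approx{\epsilon} \LLapprox'$. Let $\MMapprox$ be the principal $n \times n$ submatrix of $\LLapprox'$. Using the zero-extension trick above, the inequality $\LL' \Approx{\epsilon} \LLapprox'$ restricts to $\MM \Approx{\epsilon} \MMapprox$. Now decompose $\MMapprox = \DDhat - \AAhat$ where $\AAhat_{ij}$ is the weight of edge $(i,j)$ in $\LLapprox'$ for $i \neq j$ (and zero on the diagonal), so that $\AAhat$ is symmetric and nonnegative, and $\DDhat$ is the diagonal matrix whose $i$-th entry is the total weight of all edges of $\LLapprox'$ incident to vertex $i$ (including the edge to vertex $n+1$). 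The number of nonzeros in $\AAhat$ is bounded by the number of edges of $\LLapprox'$ among the first $n$ vertices, hence $O(n/\epsilon^2)$.

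It remains to verify $\DD \Approx{\epsilon} \DDhat$. For each standard basis vector $\ee_i$, Fact~\ref{fact}(e) applied to $\MM \Approx{\epsilon} \MMapprox$ with $\VV = \ee_i$ gives $\DD_{ii} = \ee_i^T \MM \ee_i \Approx{\epsilon} \ee_i^T \MMapprox \ee_i = \DDhat_{ii}$, i.e., $\exp(-\epsilon) \DD_{ii} \leq \DDhat_{ii} \leq \exp(\epsilon)\DD_{ii}$ for every $i$; since both matrices are diagonal, this is equivalent to the matrix relation $\DD \Approx{\epsilon} \DDhat$. The only place one has to be slightly careful is the augmentation step, to make sure nonnegativity of $\ss_i$ and symmetry of the resulting submatrix are preserved; both follow from $\MM$ being SDD with nonpositive off-diagonals. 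Everything else reduces to Theorem~\ref{thm:bss} and routine bookkeeping.
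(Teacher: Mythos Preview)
Your augmentation approach is a perfectly reasonable alternative to the paper's method, but as written it has a genuine gap: you silently assume that $\AA$ has zero diagonal. The corollary does not assume this, and in fact the paper explicitly remarks (just after Definition~\ref{def:saic}) that many of the matrices $\AA_i$ in the chain will have nonzero diagonal entries, which is precisely why $\DD_i$ and $\AA_i$ are tracked separately. Concretely, two of your equalities fail when $\AA_{ii}\neq 0$. First, with your definitions of $\LL$ and $\SS$ one computes $(\LL+\SS)_{ii}=\DD_{ii}$ and $(\LL+\SS)_{ij}=-\AA_{ij}$ for $i\neq j$, so $\LL+\SS=\MM+\mathrm{diag}(\AA)$ rather than $\MM$; hence the principal $n\times n$ submatrix of your $\LL'$ is not $\MM$. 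Second, in the last paragraph you write $\DD_{ii}=\ee_i^T\MM\ee_i$, but $\ee_i^T\MM\ee_i=\DD_{ii}-\AA_{ii}$. So what your argument actually proves is $\MM+\mathrm{diag}(\AA)\Approx{\epsilon}\MMapprox$ and $\DD_{ii}-\AA_{ii}\Approx{\epsilon}\DDhat_{ii}$, neither of which is what you want.

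The fix is easy and is exactly the device the paper uses: let $\YY=\mathrm{diag}(\AA)$, run your whole argument on the pair $(\DD-\YY,\,\AA-\YY)$ (now $\AA-\YY$ has zero diagonal, and $\MM=(\DD-\YY)-(\AA-\YY)$ is unchanged), and at the end replace $(\DDhat,\AAhat)$ by $(\DDhat+\YY,\,\AAhat+\YY)$, invoking Fact~\ref{fact}a. With that correction your augmentation proof goes through. For comparison, the paper avoids the extra vertex entirely: it strips off both the diagonal $\YY$ of $\AA$ and the ``excess'' diagonal $\XX$ so that $\DD-\XX-\AA$ is already an $n$-vertex Laplacian, applies Theorem~\ref{thm:bss} to that, deduces the diagonal approximation by the same $\ee_i$-test you use, and then adds $\XX$ and $\YY$ back. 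The two routes are equally elementary; your augmentation is arguably cleaner conceptually, while the paper's stripping keeps everything on $n$ vertices.
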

\begin{proof}
Let $\YY $ be the diagonal matrix containing the diagonal entries of $\AA $
  and let $\XX $ be the diagonal matrix so that $\DD - \XX - \AA$ has
  zero row-sums.
Then, $\DD - \XX - \AA $ is a Laplacian matrix, so by
  Theorem~\ref{thm:bss} there exists a Laplacian matrix
  $\DDapprox - \AAapprox$ where $\DDapprox$ is nonnegative diagonal,
  $\AAapprox$ has at most $O (n / \epsilon^{2})$ 
  nonzero entries and is nonnegative with zero diagonal, and
\[
  \DDapprox - \AAapprox 
 \Approx{\epsilon} \DD - \XX - \AA
 = (\DD - \XX - \YY) - (\AA - \YY).
\]  
As neither $\AAapprox$ nor $\AA  - \YY$ have diagonal entries,
  it follows that
\[
  \DDapprox \Approx{\epsilon} \DD - \XX - \YY .
\]
We now set $\DDhat = \DDapprox + \XX + \YY$
  and $\AAhat = \AAapprox + \YY$.
The desired properties of $\DDhat$ and $\AAhat$
  then follow from Fact~\ref{fact}a.
\end{proof}

\begin{theorem}
\label{thm:chainExist}
If $\MM_0 = \DD_0 - \AA_0$ is an $n$-dimensional SDDM matrix with
  condition number at most $\kappa$,
 $\DD_{0}$ is diagonal and $\AA_{0}$ is nonnegative,
 then $\MM_{0}$ has an approximate inverse chain 
  $\DD_{1}, \dots , \DD_{d}$ and $\AA_{1}, \dots , \AA_{d}$
  such that $d = O (\log \kappa)$
  and the total number of nonzero entries in the matrices in the chain
  is  $O(n \log^3{\kappa})$.
\end{theorem}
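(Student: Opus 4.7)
My plan is to construct the chain greedily, one level at a time, using Corollary~\ref{cor:bss} at each step and invoking Corollary~\ref{cor:chainShort} to terminate. The construction proceeds as follows. Set $d = \lceil \log_{4/3} \kappa \rceil$, which is $O(\log \kappa)$. For $i = 0, 1, \dots, d-1$, given $\MM_i = \DD_i - \AA_i$, first form the matrix $\DD_i - \AA_i \DD_i^{-1} \AA_i$; by Proposition~\ref{prop:exactGood}, this is again an SDDM matrix with nonnegative off-diagonal structure. Then apply Corollary~\ref{cor:bss} to this matrix with error parameter $\epsilon_i$ (to be chosen below) to obtain a nonnegative diagonal $\DD_{i+1}$ and nonnegative $\AA_{i+1}$ with $\MM_{i+1} = \DD_{i+1} - \AA_{i+1}$ having $O(n/\epsilon_i^2)$ nonzero entries, satisfying both conditions (a) and (b) of Definition~\ref{def:saic} simultaneously (the latter because Corollary~\ref{cor:bss} guarantees $\DD_{i+1} \Approx{\epsilon_i} \DD_i$ as a byproduct).

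The next step is to choose the $\epsilon_i$'s. I need $\epsilon_i \leq 1/9$ for all $i < d$ so that the hypotheses of Corollary~\ref{cor:chainShort} are met, and I need $\sum_{i=0}^{d} \epsilon_i \leq 2$ where $\epsilon_d = \ln 3 \approx 1.099$ comes from the conclusion of that corollary. So it suffices to take $\epsilon_i = \min(1/9,\, c/d)$ for all $i < d$, where $c = 2 - \ln 3$; for $d$ larger than some absolute constant this becomes $\epsilon_i = c/d = \Theta(1/\log \kappa)$. With this choice, Corollary~\ref{cor:chainShort} certifies the final condition (c) of Definition~\ref{def:saic}, and the sum $\sum_{i=0}^d \epsilon_i$ is bounded by $2$ by construction.

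Finally, I would bound the total number of nonzero entries. Each matrix $\MM_{i+1}$ has $O(n/\epsilon_i^2) = O(n d^2)$ nonzeros, and the $\DD_{i+1}$ contribute only $n$ each. Summing over $d + 1$ levels gives a total of $O(n d^3) = O(n \log^3 \kappa)$ nonzero entries, as desired.

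I do not expect a serious technical obstacle: Proposition~\ref{prop:exactGood} ensures we stay inside the class of SDDM matrices so that Corollary~\ref{cor:bss} applies repeatedly, and the error budgets are decoupled across levels. The only mild care needed is checking that Corollary~\ref{cor:bss} really delivers both the SDDM approximation and the diagonal approximation at the same time, which it does since the sparsifier produced there replaces the off-diagonal structure while preserving the diagonal corrections $\XX + \YY$, so $\DD_{i+1} = \DDapprox + \XX + \YY \Approx{\epsilon_i} \DD_i$ follows directly from $\DDapprox \Approx{\epsilon_i} \DD_i - \XX - \YY$.
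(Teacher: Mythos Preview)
Your proposal is correct and essentially identical to the paper's own proof: set $d=\lceil\log_{4/3}\kappa\rceil$, take each $\epsilon_i$ to be $\Theta(1/d)$ capped at $1/9$, build the levels by applying Corollary~\ref{cor:bss} to $\DD_i-\AA_i\DD_i^{-1}\AA_i$ (which is SDDM by Proposition~\ref{prop:exactGood}), invoke Corollary~\ref{cor:chainShort} for condition~(c), and count $O(nd^2)$ nonzeros per level for a total of $O(n\log^3\kappa)$. The paper uses $\epsilon_i=\min(1/9,\,1/2d)$ rather than your $\min(1/9,\,(2-\ln 3)/d)$, but this is an inessential difference in constants.
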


\begin{proof}
Set $d = \ceil{\log_{4/3} \kappa}$ and $\epsilon_{0}, \dots , \epsilon_{d-1}$
  to the minimum of $1/9$ and $1/2d $.
By Corollary~\ref{cor:bss} there exists a sequence of matrices
  $\MM_{i} = \DD_{i} - \AA_{i}$
  that satisfy conditions $a$ and $b$ of Definition~\ref{def:saic} 
  and that each have $O (n / \log^{2} \kappa )$ nonzero entries.
By Corollary~\ref{cor:chainShort}, we then know that
  $\DD_{d} \Approx{\epsilon_{d}} \MM_{d}$ for $\epsilon_{d} = \ln 3$.
As the sum of the $\epsilon_{i}$ is at most $1/2 + \ln 3 < 2$,
  this sequence of matrices is an approximate inverse chain.
\end{proof}

Theorem~\ref{thm:mainExistence} now follows from
  Theorem~\ref{thm:solve} and Theorem~\ref{thm:chainExist}.

\section{Efficient Parallel Construction}
\label{sec:construct}

We now show how to construct sparse approximate inverse chains efficiently in parallel.
The only obstacle is that we must employ an efficient sparsification routine
  instead of Theorem~\ref{thm:bss}.
We do this through a two-step process.
If $\AA_{i}$ is an $n$-dimensional matrix with $m$ nonzero entries,
  we first compute an approximation of
  $\DD_{i} - \AA_{i} \DD_{i}^{-1} \AA_{i}$
  that has $O (n + m \log n / \epsilon^{2})$
  nonzero entries.
We do this by observing that the off-diagonal entries of this matrix
  come from a sum of weighted cliques, and that we can sparsify
  those cliques individually.
This is made easy by the existence of a closed form for the effective resistance
  between vertices of the cliques.
We then employ a general-purpose sparsification routine
  to further reduce the number of nonzero entries to
  $O (n \log^{c} n / \epsilon^{2})$, for some constant $c$.

To find an algorithm that produces these sparsifiers in nearly-linear work
  and polylogarithmic depth, we look to the original spectral sparsification
  algorithm of Spielman and Teng~\cite{SpielmanTengSparsifier}.
It uses a polylogarithmic number of calls to a graph partitioning routine to
  divide a graph into a small set of edges plus a number of subgraphs of
  high conductance.
It then samples edges from these subgraphs at random.
This algorithm requires nearly-linear work and polylogarithmic depth
  if the graph partitioning algorithm does as well.

The graph partitioning algorithm used in \cite{SpielmanTengSparsifier}
  comes from \cite{SpielmanTengCuts}.
This algorithm, which is based on local graph clustering, could probably be
  implemented efficiently in parallel.
However, it is not stated in a parallel form in that paper.
Instead, we rely on an improvement of this graph partitioning algorithm
  by Orecchia and Vishnoi \cite{OrecchiaVishnoi}.
By using their \textsc{BalCut} algorithm, we obtain a sparsifier with fewer
  edges while using less work and in polylogarithmic depth.
To see that the \textsc{BalCut} algorithm can be implemented in nearly-linear time
  and polylogarithmic depth, we observe that all of its operations are either
  multiplication of vectors by matrices, elementary vector operations, or sorting
  and computing sparsest cuts by sweeps along vectors.
All of these components parallelize efficiently, and  Orecchia and Vishnoi
 showed  that their algorithm only requires $O (m \log^{c} m)$
  work, for some constant $c$, when asked to produce cuts of polylogarithmic conductance.
This is the case in the calls to $\textsc{BalCut}$ made by
Spielman and Teng's \cite{SpielmanTengSparsifier} algorithm,
specifically by the routine \textsc{Partition2} inside \textsc{Sparsify}.

We summarize this discussion with the following theorem.
\begin{theorem}\label{thm:efficientSparsify}
There exists an algorithm that on input an $n$-dimensional Laplacian matrix $\LL$
  with $m$ nonzero entries, an $\epsilon \in [0, 1/2]$,
  produces with probability at least $1-1/n^2$ a Laplacian matrix $\LLhat $ such that
  $\LL \Approx{\epsilon} \LLhat$ and
  $\LLhat$ has $O (n \log^{c} n / \epsilon^{2})$ entries for some constant $c$.
Moreover, this algorithm requires $O (m \log^{c_{1}} n)$ work and $O (\log^{c_{2}} n)$
  depth, for some other constants $c_{1}$ and $c_{2}$.
\end{theorem}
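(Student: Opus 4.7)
The plan is to instantiate the sparsification framework of Spielman and Teng~\cite{SpielmanTengSparsifier} but replace its partitioning subroutine with the \textsc{BalCut} algorithm of Orecchia and Vishnoi, and then verify that every remaining component parallelizes. Concretely, Spielman--Teng's \textsc{Sparsify} works by repeatedly invoking a partitioning routine (their \textsc{Partition2}) that, given a graph, separates out a small ``boundary'' set of edges and returns a collection of induced subgraphs whose conductance exceeds a polylogarithmic threshold $\phi$. Inside each high-conductance piece, edges are retained independently by random sampling with appropriate probabilities, and the boundary is recursed upon. The concentration analysis that yields $\LL \Approx{\epsilon} \LLhat$ and the $O(n \log^{c} n / \epsilon^2)$ size bound depends only on the conductance and balance guarantees of the partitioner, so it carries over unchanged once we plug in a sufficiently strong parallel partitioner.

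The first step is therefore to record the guarantees we need from the partitioner: given an input of $m$ edges, it should extract a set of balanced cuts of conductance at most polylog$(n)$ in nearly linear work and polylogarithmic depth, with high probability. Orecchia and Vishnoi's \textsc{BalCut} provides exactly this. The second step is to audit the internal operations of \textsc{BalCut} and confirm they parallelize: they consist of (i) approximate matrix-exponential applications to vectors, which reduce to a polylogarithmic number of Laplacian--vector multiplications via truncated Taylor/Chebyshev expansion (each multiplication has $O(\log n)$ depth and $O(m)$ work); (ii) elementary vector operations; and (iii) sorting entries of vectors and sweeping to find sparsest cuts along those orderings, both of which admit $O(\log n)$-depth, $O(m \log n)$-work parallel implementations. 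Composing the $O(m \log^c m)$ work bound of Orecchia--Vishnoi with these parallelizations yields $O(m \log^{c_1} n)$ work and $O(\log^{c_2} n)$ depth for a single partitioning call.

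The third step is to assemble the full \textsc{Sparsify} around \textsc{BalCut}. Each outer iteration peels off a constant fraction of the edges into the boundary, so the number of levels is $O(\log m)$; within a level, partitioning and sampling happen in parallel across the disjoint pieces, so the per-level depth is dominated by a single \textsc{BalCut} invocation, and the per-level work sums to $O(m \log^{c_1} n)$. Sampling within each high-conductance piece is embarrassingly parallel once the keep-probabilities are computed. Summing across levels gives the claimed $O(m \log^{c_1} n)$ work and $O(\log^{c_2} n)$ depth (after adjusting constants), and a union bound over the polylogarithmic number of randomized steps converts the per-call high-probability guarantee of \textsc{BalCut} and of the sampling concentration into the overall $1 - 1/n^2$ success probability.

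The main obstacle I anticipate is the bookkeeping needed to show that \textsc{BalCut}'s balance and conductance guarantees really do fit into Spielman--Teng's analysis slot for slot: their framework was written for a specific interface (a local-clustering routine that peels off a small cut), whereas \textsc{BalCut} gives balanced cuts with a slightly different quality statement. I would resolve this by showing that the stronger balance guarantee of \textsc{BalCut} can be used to emulate the required decomposition into high-conductance pieces with at most $O(\log n)$ levels of recursion, losing only additional polylogarithmic factors that are absorbed into the constant $c$ in the sparsifier size. Every other ingredient -- matrix-vector multiplications, parallel sort-and-sweep, independent sampling, and the matrix concentration bound -- is standard and introduces only $O(\log n)$ depth overhead.
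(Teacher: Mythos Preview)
Your proposal is correct and follows essentially the same approach as the paper. In fact, the paper does not give a formal proof of this theorem at all: it simply presents the discussion you have reconstructed---plug Orecchia--Vishnoi's \textsc{BalCut} into the \textsc{Partition2} slot of Spielman--Teng's \textsc{Sparsify}, observe that \textsc{BalCut}'s primitives (matrix-vector products, elementary vector operations, sort-and-sweep) all parallelize, and invoke the $O(m\log^{c}m)$ work bound for the polylogarithmic-conductance regime---and then states the theorem as a summary, so your writeup is already more detailed than the paper's own justification.
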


We cannot directly apply the above-described algorithm to sparsify
  $\DD - \AA \DD^{-1} \AA$
  because that matrix could be dense.
So, we must avoid actually constructing this matrix,
and construct  a sparse approximation of it first instead.
Let $m$ be the number of nonzero entries in $\AA$.
We will show how to construct a sparse approximation of 
  $\DD - \AA \DD^{-1} \AA$
  with $O (m \log m / \epsilon^{2})$ nonzero entries.
We begin by writing the second matrix as a sum of outer products:
\[
  \sum_{u} \AA_{:,u} \DD_{u,u}^{-1} \AA_{u,:},
\]
where $\AA_{:,u}$ denotes the $u$th column of $\AA$ and $\AA_{u,:}$ denotes
  the $u$th row.
We can see that the diagonal entries of this sum come from two types
  of products:
  $\AA_{u,u} \DD_{u,u}^{-1} \AA_{u,u}$ and
  $\AA_{u,v} \DD_{v,v}^{-1} \AA_{v,u}$.
There are only $n$ terms of the first type and $m$ terms of the second.
So, all of these can be computed in time $O (n+m)$.
We are more concerned with the off-diagonal entries.
These can come from three types of products:
  $\AA_{u,u} \DD_{u,u}^{-1} \AA_{u,v}$,
  $\AA_{v,u} \DD_{u,u}^{-1} \AA_{u,u}$,
  and $\AA_{v,u} \DD_{u,u}^{-1} \AA_{u,w}$.
There are only $O (m)$ terms of the first two types,
  so we can just compute and store all of them.
For a fixed $u$, the terms of the last type correspond
  to a weighted complete graph on the neighbors of vertex $u$
  in which the edge between $v$ and $w$ has weight
  $\AA_{v,u} \DD_{u,u}^{-1} \AA_{u,w}$.
Denote this graph by $G_{u}$
  and its  Laplacian matrix by $\LL_{u}$.
We now show how to sparsify such a weighted complete graph.

We will do this through the approach of sampling edges by their effective
  resistance introduced by Spielman and Srivastava~\cite{SpielmanSrivastava}.
The effective resistance of an edge $(v,w)$ in a weighted graph $G$ with Laplacian
  matrix $\LL_{u}$ is given by
\[
   (\ee_{v} - \ee_{w}) \LL^{\dag}_{u} (\ee_{v} - \ee_{w}),
\]
where $\ee_{v}$ is the elementary unit vector in direction $v$ and $\LL_{u}^{\dag}$ is the pseudo-inverse of
  $\LL_{u}$.

It has been observed~\cite{KMP1,KelnerL13} that 
  one can strengthen the result of \cite{SpielmanSrivastava} by replacing
  Rudelson's concentration theorem \cite{Rudelson} with that of Rudelson and Vershynin
  \cite{RudelsonVershynin}
  to obtain the following result.

\begin{theorem}\label{thm:sampling}
Let $G = (V,E,\ww)$ be a weighted graph with $n$ vertices
  and let $\LL$ be its Laplacian matrix.
Consider the distribution on edges obtained by choosing an edge from
  $E$ with probability proportional to its weight times its effective resistance,
  and then dividing its weight by this probability.
For every $0 < \epsilon < 1/2$,
  if one forms the Laplacian of 
  $O (n \log n / \epsilon^{2})$ edges chosen from this distribution,
  with replacement, then the resulting Laplacian matrix, $\LLhat$,
  satisfies $\LL \Approx{\epsilon} \LLhat $
  with probability at least $1 - 1/n^{2}$.
\end{theorem}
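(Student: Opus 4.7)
The plan is to follow the Spielman-Srivastava sampling framework \cite{SpielmanSrivastava}, replacing the matrix concentration step with the Rudelson-Vershynin bound. First I would decompose $\LL = \sum_{e \in E} \ww_e \bb_e \bb_e^T$, where $\bb_e = \ee_u - \ee_v$ for $e = (u,v) \in E$, and move to the image of $\LL$ by conjugating with $\LL^{\dagger/2}$. Setting $\vv_e \defeq \sqrt{\ww_e}\,\LL^{\dagger/2} \bb_e$, one obtains $\PPi \defeq \LL^{\dagger/2} \LL \LL^{\dagger/2} = \sum_e \vv_e \vv_e^T$, the orthogonal projection onto the image of $\LL$; furthermore $\norm{\vv_e}^2 = \ww_e \bb_e^T \LL^{\dagger} \bb_e = \ww_e R_e$ is exactly the effective-resistance weight that defines the sampling distribution.

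Next I would observe that $Z \defeq \sum_e \ww_e R_e = \mathrm{tr}(\PPi) = n - 1$, so the sampling probability is $p_e = \norm{\vv_e}^2 / Z$. Each sampled edge $e_i$ contributes the rescaled rank-one matrix $Y_i \defeq (1/p_{e_i}) \vv_{e_i} \vv_{e_i}^T$; these $Y_i$ are i.i.d., satisfy $\E[Y_i] = \sum_e \vv_e \vv_e^T = \PPi$, and have bounded operator norm $\norm{Y_i} = \norm{\vv_{e_i}}^2 / p_{e_i} = Z = n-1$. Writing $\widehat{\PPi} \defeq (1/N) \sum_{i=1}^{N} Y_i$, the theorem reduces to the statement that this empirical average concentrates around its mean $\PPi$ in operator norm.

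Then I would invoke the Rudelson-Vershynin concentration inequality \cite{RudelsonVershynin} for sums of independent rank-one random operators: it guarantees that $N = C n \log n / \epsilon^2$ samples suffice to make $\norm{\widehat{\PPi} - \PPi} \leq \epsilon$ with probability at least $1 - 1/n^2$, for a sufficiently large absolute constant $C$. This is the improvement over the Rudelson bound used originally in \cite{SpielmanSrivastava}, which would cost an extra $\log n$ factor. Finally, applying Fact~\ref{fact}e with $\VV = \LL^{1/2}$ (using that both $\LL$ and $\LLhat$ share the all-ones vector in their kernel so one can restrict to the image of $\LL$) translates $\widehat{\PPi} \Approx{\epsilon} \PPi$ back into $\LL \Approx{\epsilon} \LLhat$.

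The main obstacle is the careful invocation of the Rudelson-Vershynin inequality in the correct normalization: one must set up a noncommutative Khintchine-type bound with a symmetrization argument so that the sample complexity is $O(n \log n / \epsilon^2)$ rather than $O(n \log^2 n / \epsilon^2)$, using that all the summands have rank one and uniformly bounded operator norm. Once the concentration statement is calibrated this way, the rest of the argument is bookkeeping: checking that the sampling probabilities, rescaling factors, and ambient projection $\PPi$ line up to produce the claimed spectral approximation of $\LL$.
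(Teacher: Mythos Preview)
Your proposal is correct and is essentially the standard proof of this result. Note, however, that the paper does not give its own proof of this theorem: it is stated as a known result, attributed to \cite{SpielmanSrivastava} with the concentration step upgraded from \cite{Rudelson} to \cite{RudelsonVershynin} (as observed in \cite{KMP1,KelnerL13}). Your outline is precisely the argument those references carry out, so there is nothing further to compare.
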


To apply this theorem, we need to know the effective resistances between pairs
  of vertices in $G_{u}$.
\begin{claim}\label{clm:reff}
The effective resistance between $v$ and $w$ in $G_{u}$
  is 
\[
\frac{\DD_{u,u}}{d_{u}}
\left(\frac{1}{A_{u,v}} + \frac{1}{A_{u,w}} \right),
\]
where $d_{u} = \sum_{v \not = u} A_{u,v}$.
\end{claim}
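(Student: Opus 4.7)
The plan is to recognize $G_u$ as, up to a positive scalar, the Schur complement of a weighted star, from which the effective resistance can be read off immediately. Let $a_i = \AA_{u, v_i}$ denote the weight from $u$ to its $i$th neighbor $v_i$, and let $S$ be the star with center $u$ and leaves $v_1, \dotsc, v_k$, in which edge $(u, v_i)$ carries weight $a_i$. The Laplacian $\LL_S$ then has $\LL_S[u,u] = d_u$, $\LL_S[v_i,v_i] = a_i$, and $\LL_S[u,v_i] = -a_i$. The first step is a direct algebraic check that
$$\LL_{G_u} \;=\; \frac{d_u}{\DD_{u,u}} \; \LL_S^{\mathrm{sch}}, \qquad \text{where} \qquad \LL_S^{\mathrm{sch}} \;=\; \mathrm{diag}(a_i) - \frac{1}{d_u}\, \aa \aa^T$$
is the Schur complement of $\LL_S$ obtained by eliminating $u$. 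This just amounts to comparing the $(v_i,v_j)$ off-diagonals ($-a_i a_j/\DD_{u,u}$ versus $-a_i a_j/d_u$) and the $v_i$ diagonals ($a_i(d_u - a_i)/\DD_{u,u}$ versus $a_i(d_u - a_i)/d_u$); the two matrices share a sign pattern and differ by the global factor $d_u/\DD_{u,u}$.

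The rest of the argument rests on two standard facts from electrical network theory, which I would invoke without reproving. First, Schur complementation preserves effective resistances among the retained vertices: if $\xx$ solves $\LL_S \xx = \ee_v - \ee_w$ with $v, w \neq u$, then the restriction of $\xx$ to the leaves solves $\LL_S^{\mathrm{sch}} \xx' = \ee_v - \ee_w$, and the voltage drop $(\ee_v - \ee_w)^T \xx$ is unchanged. Second, in the star $S$ the only $v$-to-$w$ path passes through $u$, so by series composition
$$R_{\mathrm{eff}}^{S}(v, w) \;=\; \frac{1}{a_v} + \frac{1}{a_w} \;=\; \frac{1}{\AA_{u,v}} + \frac{1}{\AA_{u,w}}.$$

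Finally I would incorporate the scale factor: multiplying a Laplacian by a positive constant $c$ divides every effective resistance by $c$. Applied to $\LL_{G_u} = (d_u/\DD_{u,u})\, \LL_S^{\mathrm{sch}}$ and combined with the two facts above, this yields
$$R_{\mathrm{eff}}^{G_u}(v, w) \;=\; \frac{\DD_{u,u}}{d_u}\, R_{\mathrm{eff}}^{S}(v, w) \;=\; \frac{\DD_{u,u}}{d_u}\left(\frac{1}{\AA_{u,v}} + \frac{1}{\AA_{u,w}}\right),$$
as claimed. The main (and essentially only) obstacle is the bookkeeping step of identifying $\LL_{G_u}$ with a rescaled Schur complement of a star; the invariance of effective resistance under Schur complement and under global rescaling, together with the trivial series computation in the star, are textbook.
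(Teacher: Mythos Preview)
Your argument is correct, but the paper takes a much shorter and more direct route. The paper simply writes down the candidate potential vector $\xx = \ee_v/\AA_{u,v} - \ee_w/\AA_{u,w}$ and checks in one line that $\LL_u \xx = (d_u/\DD_{u,u})(\ee_v - \ee_w)$; the effective resistance $(\ee_v - \ee_w)^T \LL_u^{\dag}(\ee_v - \ee_w)$ then falls out immediately as $(\DD_{u,u}/d_u)(\ee_v - \ee_w)^T \xx$. No Schur complements, no star graph, no auxiliary facts are invoked.

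Your approach is more structural: by recognizing $\LL_{G_u}$ as a rescaled Schur complement of a star, you \emph{explain} why the formula is exactly the series resistance through the hub, scaled by $\DD_{u,u}/d_u$. This is more illuminating and generalizes cleanly, at the cost of importing two standard facts (invariance of effective resistance under Schur complement and under global rescaling) that the paper avoids by its bare-hands verification. Either proof is fine; the paper's is quicker, yours tells the reader where the answer comes from.
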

\begin{proof}
One can check that 
\[
\LL_{u} (\ee_{v}/A_{u,v} - \ee_{w}/A_{u,w}) = 
  (\ee_{v} - \ee_{u}) d_{u} / \DD_{u,u}.
\]
\end{proof}

Note that $\DD_{u,u}$ can be larger than $d_u$.

\begin{corollary}\label{cor:sample}
There exists an algorithm that when given as input
  an $\epsilon \in [0,1/2]$ and 
  an $n$-dimensional SDDM matrix $\MM = \DD - \AA$,
  with $\DD$ nonnegative diagonal and $\AA$ nonnegative
  with $m$ nonzero entries,
  produces with probability at least $1 - 1/n$
  a nonnegative diagonal matrix $\DDhat$ and 
  a nonnegative  $\AAhat$ with at most $O (m \log n / \epsilon^{2})$
  nonzero entries so that
\[
 \DDhat  - \AAhat \Approx{\epsilon}  \DD - \AA \DD^{-1} \AA 
\quad \text{and} \quad 
 \DDhat \Approx{\epsilon} \DD.
\]
Moreover, this algorithm runs in time $O (m \log^{2} n / \epsilon^{2})$.
\end{corollary}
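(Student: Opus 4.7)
The plan is to avoid ever materializing the potentially dense product $\AA \DD^{-1} \AA$: express it as a sum of rank-one outer products $\AA_{:,u} \DD_{uu}^{-1} \AA_{u,:}$ over $u$, reinterpret the bulk of each as the Laplacian of a weighted clique on the neighbors of $u$, and sparsify those cliques independently using the closed-form effective resistances from Claim~\ref{clm:reff}. The $O(m)$ entries of $\AA\DD^{-1}\AA$ arising from $\AA_{uu}$ contributions (types one and two in the discussion preceding Claim~\ref{clm:reff}) are computed and stored directly.

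First I would set up the clique decomposition. For each $u$, let $G_{u}$ denote the weighted complete graph on $N_{u} = \{v \neq u : \AA_{vu} > 0\}$ with edge weight $\AA_{vu}\AA_{uw}/\DD_{uu}$, and let $\LL_{u}$ be its Laplacian. Let $\YY$ be the (directly computable) diagonal of $\AA\DD^{-1}\AA$ and $\XX$ the diagonal chosen so that $\DD - \XX - \AA\DD^{-1}\AA$ has zero row sums; the SDD condition on $\MM$ forces $\AA\DD^{-1}\AA\,\mathbf{1} \leq \AA\,\mathbf{1} \leq \DD\,\mathbf{1}$, so $\XX$ is nonnegative. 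Then $\DD - \XX - \AA\DD^{-1}\AA$ is a Laplacian whose off-diagonal entries decompose as a sum of the exactly stored $\AA_{uu}$ pieces and $\sum_u \mathrm{offdiag}(\LL_u)$.

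Next I would sparsify each $\LL_{u}$ by effective-resistance sampling. By Claim~\ref{clm:reff} the probability of edge $(v,w)$ under the weight-times-effective-resistance distribution simplifies to a normalized $\AA_{uv} + \AA_{uw}$, so a sample can be drawn in $O(1)$ by picking one endpoint $v$ with probability $\AA_{uv}/d_u$ (where $d_u = \sum_{w \neq u} \AA_{uw}$) and the other uniformly from $N_{u} \setminus \{v\}$. Theorem~\ref{thm:sampling} then gives $\tilde{\LL}_{u} \Approx{\epsilon} \LL_{u}$ with probability at least $1 - 1/n^{2}$ from $O(|N_{u}| \log n / \epsilon^{2})$ samples, a union bound over the cliques leaves total failure probability at most $1/n$, and the total number of sampled edges is $\sum_u O(|N_u|\log n/\epsilon^2) = O(m \log n / \epsilon^{2})$.

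Finally I would assemble $\DDhat$ and $\AAhat$ along the lines of Corollary~\ref{cor:bss}. Iterating Fact~\ref{fact}b produces a sparsified Laplacian $\tilde{\LL} \Approx{\epsilon} \DD - \XX - \AA\DD^{-1}\AA$ by combining $\sum_u \tilde{\LL}_u$ with the exactly stored $\AA_{uu}$ pieces. Write $\tilde{\LL} = \DDapprox - \AAapprox$ with $\DDapprox$ its diagonal and $\AAapprox \geq 0$ its negated off-diagonal; Fact~\ref{fact}e applied along each basis vector forces $\DDapprox \Approx{\epsilon} \DD - \XX - \YY$, since neither $\AAapprox$ nor $\AA\DD^{-1}\AA - \YY$ has any diagonal entry. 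Setting $\DDhat = \DDapprox + \XX + \YY$ and $\AAhat = \AAapprox + \YY$, two applications of Fact~\ref{fact}a (with nonnegative $\XX$ and $\XX + \YY$) yield $\DDhat - \AAhat \Approx{\epsilon} \DD - \AA\DD^{-1}\AA$ and $\DDhat \Approx{\epsilon} \DD$; both matrices are nonnegative, $\AAhat$ has $O(m \log n / \epsilon^{2})$ nonzeros, and the running time is dominated by sampling, giving $O(m \log^{2} n / \epsilon^{2})$ after a $\log n$ overhead for the sampling data structures. The main obstacle is that $\AA\DD^{-1}\AA$ is dense in general, ruling out any direct call to a generic Laplacian sparsifier; the clique decomposition combined with Claim~\ref{clm:reff} sidesteps this by sampling each rank-one piece in $O(|N_u|)$ preprocessing work without ever forming its entries.
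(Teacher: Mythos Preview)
Your proposal is correct and follows essentially the same approach as the paper: decompose $\AA\DD^{-1}\AA$ into per-vertex cliques $\LL_u$, sparsify each by effective-resistance sampling using the closed form from Claim~\ref{clm:reff}, and reassemble the diagonal via the $\XX,\YY$ splitting exactly as in Corollary~\ref{cor:bss}. Your two-stage sampling (first endpoint proportional to $\AA_{uv}$, second uniform on $N_u\setminus\{v\}$) is a clean concrete realization of the distribution the paper computes, and you make explicit the union bound over the $n$ cliques that the paper's proof leaves implicit.
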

\begin{proof}
We handle the diagonal entries as in Corollary~\ref{cor:bss}.
We also keep all of the off-diagonal entries of the first two types
  described above.
For each vertex $u$, let $\delta_{u}$ be the number of neighbors of vertex $u$.
We will approximate $\LL_{u}$ by a Laplacian $\LLhat_{u}$ with
  $O (\delta_{u} \log n / \epsilon^{2})$ non-zero entries.
By Fact~\ref{fact}b, the sum of the $\LLhat_{u}$
  approximates the sum of the $\LL_{u}$.
By Theorem~\ref{thm:sampling}, we can construct $\LLhat_{u}$
  by sampling the appropriate number of edges of $G_{u}$, and
  then re-scaling them.
It remains to check that we can sample each of these edges 
  in time $O (\log n)$.
To see this, observe that we need to sample edge $(v,w)$ with probability
  proportional to
\[
\AA _{v,u} \DD _{u,u}^{-1} \AA _{u,w} 
( \DD _{u,u} / d_{u} )
\left(1/\AA _{u,v} + 1/A\AA _{u,w} \right)
=
 (\AA _{u,v} + \AA _{u,w}) / d_{u}.
\]
So, in time $O (\delta_{u})$ we can compute for each $v$
  the probability that an edge involving $v$ is sampled.
We can then create a data structure that will allow us to sample
  $v$ with this probability in time $O (\log n)$, and to then follow it by sampling
  $w$ from the distribution induced by fixing the choice of $v$.
\end{proof}

\begin{theorem}
\label{thm:buildChain}
There exists an algorithm that on input 
  an $n$-dimensional diagonal $\DD_{0}$ and  nonnegative $\AA_{0}$
  with $m$ nonzero entries
  such that 
   $\MM_0 = \DD_0 - \AA_0$ 
  has
  condition number at most $\kappa$,
  constructs with probability at least $1/2$ an approximate inverse chain 
  $\DD_{1}, \dots , \DD_{d}$ and $\AA_{1}, \dots , \AA_{d}$
  such that $d = O (\log \kappa)$
  and the total number of nonzero entries in the matrices in the chain
  is  $O(n \log^{c} n\log^3{\kappa})$, for some constant $c$.
Moreover, the algorithm runs in work $O (m \log^{c_{1}} n \log^3{\kappa})$
  and depth $O (\log^{c_{2}} n  \log{\kappa})$, for some other constants $c_{1}$
  and $c_{2}$.
\end{theorem}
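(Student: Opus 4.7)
The plan is to build the chain level by level. Following the parameters of Theorem~\ref{thm:chainExist}, I set $d = \ceil{\log_{4/3} \kappa}$ and $\epsilon_i = \min(1/9, 1/(2d))$ for $0 \leq i < d$, so that $\sum_{i=0}^{d-1} \epsilon_i \leq 1/2$. At each level $i$ the goal is to produce a sparse $\MM_{i+1} = \DD_{i+1} - \AA_{i+1}$ that $\epsilon_i$-approximates $\DD_i - \AA_i \DD_i^{-1} \AA_i$; if this succeeds at all $d$ levels, Corollary~\ref{cor:chainShort} supplies the final condition $\DD_d \Approx{\ln 3} \MM_d$, and the resulting sequence is a valid approximate inverse chain in the sense of Definition~\ref{def:saic}.

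The construction of $\MM_{i+1}$ would proceed in two stages. First, apply Corollary~\ref{cor:sample} to $(\DD_i, \AA_i)$ with error parameter $\epsilon_i/2$ to produce an SDDM matrix $\DDtil_i - \AAtil_i$ of size $O(m_i \log n / \epsilon_i^2)$ that $(\epsilon_i/2)$-approximates $\DD_i - \AA_i \DD_i^{-1} \AA_i$; crucially this happens without ever forming $\DD_i - \AA_i \DD_i^{-1} \AA_i$ explicitly. Second, convert the intermediate to a Laplacian by splitting off its excess diagonal $\XX$ and the self-loop diagonal $\YY$ exactly as in the proof of Corollary~\ref{cor:bss}, apply Theorem~\ref{thm:efficientSparsify} to the resulting Laplacian with error parameter $\epsilon_i/2$, and reattach $\XX$ and $\YY$ to obtain $\MM_{i+1}$ with $O(n \log^c n / \epsilon_i^2) = O(n \log^c n \log^2 \kappa)$ nonzero entries. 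Fact~\ref{fact}a justifies the diagonal splitting and reattaching, while Fact~\ref{fact}c composes the two $\epsilon_i/2$-approximations into a single $\epsilon_i$-approximation, verifying conditions (a) and (b) of Definition~\ref{def:saic}. Proposition~\ref{prop:exactGood} together with induction ensures that the input at every level remains SDDM, so both sparsifiers remain applicable throughout.

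With the chain in hand, the size and complexity bounds are obtained by aggregation. The chain has $d = O(\log \kappa)$ levels each of size $O(n \log^c n \log^2 \kappa)$, for total size $O(n \log^c n \log^3 \kappa)$. Level $0$ dominates the work, being a call to Corollary~\ref{cor:sample} on the input followed by Theorem~\ref{thm:efficientSparsify} on a matrix of size $O(m \log n \log^2 \kappa)$; this contributes $O(m \log^{c_1+1} n \log^2 \kappa)$. Each of the remaining $O(\log \kappa)$ levels contributes $\Otil(n) \cdot \mathrm{poly}(\log \kappa)$ work, and summing and absorbing $\log n$ and $\log \kappa$ factors into the constants gives the claimed work bound after redefining $c_1$. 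The depth is $O(\log^{c_2} n)$ per level (the sparsifier depth plus $O(\log n)$ for the auxiliary vector operations) times $O(\log \kappa)$ levels. For success, a union bound over $O(\log \kappa)$ calls to Corollary~\ref{cor:sample} (failure $\leq 1/n$ each) and Theorem~\ref{thm:efficientSparsify} (failure $\leq 1/n^2$ each) yields overall failure probability at most $O(\log \kappa / n)$, which is below $1/2$ once $n$ exceeds an absolute constant; the finitely many remaining cases can be handled directly.

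The main obstacle I anticipate is the interface between the two sparsification stages: Corollary~\ref{cor:sample} produces a general SDDM matrix (with diagonal slack and possible self-loops), whereas Theorem~\ref{thm:efficientSparsify} is stated only for Laplacians. The fix, following the proof of Corollary~\ref{cor:bss}, is to peel off the excess-row-sum diagonal and the self-loop diagonal to expose a genuine Laplacian, sparsify that Laplacian, and then add the diagonal corrections back; Fact~\ref{fact}a guarantees that this reattachment preserves the $\epsilon_i/2$-approximation and does not disturb the diagonal approximation needed for condition (b). Everything else is a careful but routine accounting of $\log n$ and $\log \kappa$ factors, enabled by the per-level size bound of $O(n \log^c n \log^2 \kappa)$ that controls both the work at the next level and the cost of the second sparsification call within the current level.
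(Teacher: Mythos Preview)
Your proposal is correct and follows essentially the same approach as the paper: the paper's proof is a two-sentence sketch that simply says to mimic the proof of Theorem~\ref{thm:chainExist} but replace the existential sparsifier (Corollary~\ref{cor:bss} / Theorem~\ref{thm:bss}) by the two-stage algorithmic sparsification of Corollary~\ref{cor:sample} followed by Theorem~\ref{thm:efficientSparsify}. You have filled in the details---the Laplacian/SDDM interface via diagonal peeling, the per-level size and work accounting, and the union bound for the success probability---that the paper leaves implicit.
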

\begin{proof}
The proof follows the proof of Theorem~\ref{thm:chainExist}.
However, in place of Corollary~\ref{cor:bss} and Theorem~\ref{thm:bss},
  it uses the algorithms implicit in Corollary~\ref{cor:sample} 
  and Theorem~\ref{thm:efficientSparsify} to construct $\DD_{i}$
  and $\AA_{i}$ from $\DD_{i-1}$ and $\AA_{i-1}$.
\end{proof}

Theorem~\ref{thm:mainEfficient} now follows from Theorem~\ref{thm:solve}
  and Theorem~\ref{thm:buildChain}.

\section*{Acknowledgement}
We thank Gary Miller for very helpful discussions, and Jakub Pachocki
for comments on an earlier draft.

\newcommand{\etalchar}[1]{$^{#1}$}

\begin{appendix}

\section{The Condition Number of a Submatrix}
\label{sec:condition}

To learn more about the reductions between the problems of solving systems of
  linear equations in various forms of SDD matrices, we refer the reader
  to either Section 3.2 of
  \cite{SpielmanTengLinsolve}  or Appendix A of \cite{KOSZ}.
For example, Spielman and Teng \cite{SpielmanTengLinsolve} point out that
  one can solve a system of equations $\LL \xx = \bb$ in a Laplacian matrix by
  first solving the system in which the first entry of $\xx$ is forced to be zero. 
After solving this reduced system, one can orthogonalize the solution with respect
  to the nullspace of $\LL$, the all 1s vector.
The reduced system is a linear system in the submatrix of $\LL$ obtained by removing
  its first row and column.
The resulting matrix is a SDDM matrix.
We now show that its condition number is at most $n$ times the finite condition number of $\LL$.
As the condition number of a submatrix of a positive definite matrix is at most the condition number
  of the original, the same bound holds for every submatrix of $\LL$.

\begin{lemma}\label{lem:cond}
Let $\LL$ be the Laplacian matrix of a connected graph on $n$ vertices and let $\MM$
  be the submatrix of $\LL$ containing all rows and columns of $\LL$ but the first.
Then, $\lambda_{1} (\MM) \geq \lambda_{2} (\LL) / n$.
\end{lemma}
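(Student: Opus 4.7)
The plan is to use the Courant--Fischer / Rayleigh quotient characterization of eigenvalues, lifting test vectors for $\MM$ to test vectors for $\LL$ and then paying a factor of $n$ for projecting away the kernel direction.

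More concretely, I would first observe that for any $\yy \in \mathbb{R}^{n-1}$, the vector $\xx \in \mathbb{R}^{n}$ obtained by prepending a $0$ (so $x_1 = 0$ and $x_i = y_{i-1}$ for $i \geq 2$) satisfies $\xx^T \LL \xx = \yy^T \MM \yy$ and $\|\xx\|^2 = \|\yy\|^2$. Hence
\[
\lambda_1(\MM) = \min_{\yy \neq 0} \frac{\yy^T \MM \yy}{\yy^T \yy} = \min_{\substack{\xx \neq 0 \\ x_1 = 0}} \frac{\xx^T \LL \xx}{\xx^T \xx}.
\]
To relate the right-hand side to $\lambda_2(\LL)$, I would decompose $\xx$ into its component $\bar{x}\mathbf{1}$ along the kernel of $\LL$ (where $\bar{x} = \frac{1}{n}\sum_i x_i$) and its component $\xx' = \xx - \bar{x}\mathbf{1}$ orthogonal to $\mathbf{1}$. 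Then $\xx^T \LL \xx = (\xx')^T \LL \xx' \geq \lambda_2(\LL) \|\xx'\|^2$, using that $\lambda_2(\LL)$ is the smallest eigenvalue of $\LL$ restricted to $\mathbf{1}^\perp$.

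The remaining step is the key quantitative estimate: bound $\|\xx'\|^2 / \|\xx\|^2$ from below when $x_1 = 0$. Since $\|\xx'\|^2 = \|\xx\|^2 - n \bar{x}^2$ and $\bar{x} = \frac{1}{n} \sum_{i \geq 2} x_i$, Cauchy--Schwarz applied to the $(n-1)$ nonzero coordinates gives
\[
n \bar{x}^2 = \frac{1}{n}\Bigl(\sum_{i \geq 2} x_i\Bigr)^2 \leq \frac{n-1}{n} \sum_{i \geq 2} x_i^2 = \frac{n-1}{n}\|\xx\|^2,
\]
so $\|\xx'\|^2 \geq \|\xx\|^2/n$. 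Combining this with the previous inequality yields $\xx^T \LL \xx \geq (\lambda_2(\LL)/n)\|\xx\|^2$, and taking the minimum over admissible $\xx$ gives $\lambda_1(\MM) \geq \lambda_2(\LL)/n$.

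The only mildly subtle point is the Cauchy--Schwarz step, which is why the factor of $n$ (rather than, say, a constant) appears: the constraint $x_1 = 0$ forces $\xx$ to have nearly all its mass outside the kernel, but the extreme case where $\xx$ is almost proportional to $\mathbf{1}$ on the remaining coordinates is what saturates the $1/n$ loss. Everything else is routine linear algebra.
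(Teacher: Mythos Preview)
Your argument is correct and is essentially identical to the paper's proof: both lift a test vector for $\MM$ to one for $\LL$ by prepending a zero, project off the $\mathbf{1}$ direction, and use Cauchy--Schwarz (equivalently the $\ell_1$--$\ell_2$ inequality on $n-1$ coordinates) to show the projected vector retains at least a $1/n$ fraction of the norm. The only cosmetic differences are in notation and in how the Cauchy--Schwarz step is phrased.
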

\begin{proof}
For any vector $\vv $ of length $n-1$, let $\uu$ be the vector consisting of a 0 followed by $\vv$.
Then, $\vv^{T} \MM \vv = \uu^{T} \LL \uu$. 
As $\bvec{1}$ spans the nullspace of $\LL$, let $\xx$ be the vector obtained by orthogonalizing
  $\uu$ with respect to $\bvec{1}$:
\[
  \xx = \uu - \mu \bvec{1},
\]
where $\mu$ is the average of the entries of $\uu$.
Then, $\vv^{T} \MM \vv = \xx^{T} \LL \xx$.
We will show that $\norm{\xx} \geq \norm{\vv} / \sqrt{n}$.
The lemma will then follow, as
\[
  \lambda_{1} (\MM) = \min_{\vv} \frac{\vv^{T} \MM \vv}{\vv^{T} \vv}.
\]

We have 
\[
\mu \leq \norm{\uu}_{1}/n = \norm{\vv}_{1} / n \leq (\sqrt{n-1}/n) \norm{\vv}_{2},
\]
so
\[
  \norm{\mu \bvec{1}} \leq \sqrt{(n-1)/n} \norm{\vv}_{2} =  \sqrt{(n-1)/n} \norm{\uu}_{2}.
\]
As $\uu - \mu \bvec{1}$ is orthogonal to $\uu$,
\[
  \norm{\uu - \mu \bvec{1}}^{2} 
 =
  \norm{\uu}^{2} - \norm{\mu \bvec{1}}^{2}
 \geq 
  \norm{\uu}^{2} \left(1 - (n-1)/n \right)
 = 
  \norm{\uu}^{2} /n
 = 
  \norm{\vv}^{2} /n.
\]
\end{proof}

\begin{corollary}\label{cor:cond}
Under the conditions of Lemma~\ref{lem:cond}, the condition number of 
  $\MM$ is at most $n$
  times the finite condition number of $\LL$.
Moreover, the same is true for every principal submatrix of $\LL$.
\end{corollary}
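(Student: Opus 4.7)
My plan is to assemble the corollary directly from Lemma~\ref{lem:cond} together with Cauchy's interlacing theorem for principal submatrices. The finite condition number of $\LL$ is $\lambda_n(\LL)/\lambda_2(\LL)$ (its $n-1$ nonzero eigenvalues), while the condition number of the positive definite matrix $\MM$ is $\lambda_{\max}(\MM)/\lambda_{\min}(\MM)$. Lemma~\ref{lem:cond} already supplies $\lambda_{\min}(\MM) \geq \lambda_2(\LL)/n$, so I only need a matching upper bound on $\lambda_{\max}(\MM)$. Since $\MM$ is obtained from $\LL$ by deleting one row and the corresponding column, Cauchy interlacing gives $\lambda_{\max}(\MM) \leq \lambda_{\max}(\LL) = \lambda_n(\LL)$. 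Dividing yields
\[
  \kappa(\MM) \;=\; \frac{\lambda_{\max}(\MM)}{\lambda_{\min}(\MM)} \;\leq\; \frac{\lambda_n(\LL)}{\lambda_2(\LL)/n} \;=\; n \cdot \frac{\lambda_n(\LL)}{\lambda_2(\LL)},
\]
which is exactly $n$ times the finite condition number of $\LL$.

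For the second sentence, I would reduce an arbitrary principal submatrix to the case just proved. Let $\MM'$ be the principal submatrix obtained by deleting some nonempty set $S$ of row/column indices of $\LL$. Pick any $i \in S$ and let $\MM_{i}$ denote the submatrix of $\LL$ obtained by deleting only row/column $i$. The proof of Lemma~\ref{lem:cond} never used that the deleted index was the first one; it only used that $\LL$ has an all-ones null vector and that $\uu$ has a single zero coordinate. So the same argument shows $\lambda_{\min}(\MM_{i}) \geq \lambda_2(\LL)/n$. Now $\MM'$ is a principal submatrix of $\MM_{i}$, so one more application of Cauchy interlacing gives $\lambda_{\min}(\MM') \geq \lambda_{\min}(\MM_{i}) \geq \lambda_2(\LL)/n$ and $\lambda_{\max}(\MM') \leq \lambda_{\max}(\MM_{i}) \leq \lambda_n(\LL)$, yielding the same bound $\kappa(\MM') \leq n \cdot \lambda_n(\LL)/\lambda_2(\LL)$.

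Since the real content was proved in Lemma~\ref{lem:cond}, I do not anticipate any serious obstacle here; the only point requiring a sentence of care is noting that Lemma~\ref{lem:cond} applies to the deletion of \emph{any} single index (not just the first), which lets me chain the interlacing step for arbitrary principal submatrices.
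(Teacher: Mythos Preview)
Your proposal is correct and follows essentially the same route as the paper: use Cauchy interlacing to bound $\lambda_{\max}(\MM)$ by $\lambda_{\max}(\LL)$, combine with Lemma~\ref{lem:cond} for the first claim, and then invoke interlacing once more for arbitrary principal submatrices. Your treatment of the second sentence is in fact slightly more careful than the paper's, which simply notes that any further principal submatrix of $\MM$ has condition number no larger; your explicit remark that Lemma~\ref{lem:cond} applies to the deletion of any single index cleanly covers principal submatrices of $\LL$ that happen to retain the first row and column.
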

\begin{proof}
The largest and smallest eigenvalues of a principal submatrix of a matrix 
  are between the largest and smallest eigenvalues of that matrix.
As $\MM$ is a sub-matrix of $\LL$, its largest eigenvalue is smaller
  than the largest eigenvalue of $\LL$.
So, the condition number of $\MM$ is at most $n$ times the finite condition number
  of $\LL$.
The condition number of every sub-matrix of $\MM$ can only be smaller.
\end{proof}

\section{Proof of Lemma~\ref{lem:DhatAhat} }
\label{sec:DhatAhat}

The proof is a routine calculation.
We build to it in a few steps.

\begin{proposition}\label{pro:XAX}
Let $\AA$ be a positive definite matrix and let
 $\XX$ be a non-negative diagonal matrix.
Then, 
\[
  \lambda_{max} (\XX \AA \XX) \leq \lambda_{max} (\AA) \lambda_{max} (\XX)^{2},
\]
and
\[
  \lambda_{min} (\XX \AA \XX) \geq \lambda_{min} (\AA) \lambda_{min} (\XX)^{2}.
\]
\end{proposition}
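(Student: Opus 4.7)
The plan is to use the Rayleigh quotient characterization of extremal eigenvalues together with the substitution $\ww = \XX \vv$.

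For the upper bound, for any nonzero $\vv$, writing $\ww = \XX \vv$, we have
\[
\frac{\vv^{T} \XX \AA \XX \vv}{\vv^{T} \vv}
=
\frac{\ww^{T} \AA \ww}{\vv^{T} \vv}
\leq
\lambda_{max}(\AA) \frac{\ww^{T} \ww}{\vv^{T} \vv}
=
\lambda_{max}(\AA) \frac{\vv^{T} \XX^{2} \vv}{\vv^{T} \vv}.
\]
Since $\XX$ is a non-negative diagonal matrix, $\XX^{2}$ is diagonal with entries at most $\lambda_{max}(\XX)^{2}$, so the final ratio is at most $\lambda_{max}(\XX)^{2}$. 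Taking the maximum over $\vv$ yields the claimed bound on $\lambda_{max}(\XX \AA \XX)$.

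For the lower bound, the same identity gives
\[
\frac{\vv^{T} \XX \AA \XX \vv}{\vv^{T} \vv}
=
\frac{\ww^{T} \AA \ww}{\ww^{T} \ww}
\cdot
\frac{\vv^{T} \XX^{2} \vv}{\vv^{T} \vv}
\geq
\lambda_{min}(\AA) \cdot \lambda_{min}(\XX)^{2},
\]
provided $\ww \neq \bvec{0}$, which holds whenever $\lambda_{min}(\XX) > 0$. When $\lambda_{min}(\XX) = 0$ the claimed lower bound is trivially $0$, and $\XX \AA \XX$ is positive semidefinite so the inequality still holds. Taking the minimum over $\vv$ completes the proof.

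There is no real obstacle here; the only subtlety is handling the degenerate case $\lambda_{min}(\XX) = 0$ in the lower bound, which is immediate.
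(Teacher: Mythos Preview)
Your proof is correct and takes essentially the same Rayleigh-quotient approach as the paper. In fact your version is slightly more careful: the paper's argument writes $\vv^{T}\XX^{-2}\vv$ and thus tacitly assumes $\XX$ is invertible, whereas you avoid inverting $\XX$ and explicitly dispose of the degenerate case $\lambda_{\min}(\XX)=0$.
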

\begin{proof}
We just prove the first inequality.
The second is similar.
We have
\[
 \lambda_{max} (\XX \AA \XX)
=
  \max_{\vv } 
  \frac{\vv^{T} \XX \AA \XX \vv  }{\vv^{T} \vv}
=
  \max_{\vv } 
  \frac{\vv^{T} \AA \vv  }{\vv^{T} \XX^{-2} \vv}
\leq 
  \max_{\vv } 
  \frac{\vv^{T} \AA \vv  \lambda_{max} (\XX)^{2} }{\vv^{T} \vv}
=
  \lambda_{max} (\AA) \lambda_{max} (\XX)^{2}.
\]
\end{proof}

\begin{proposition}\label{pro:DhatAhat}
Let $\DDtil$ and $\DDhat$ be positive diagonal matrices and
  let $\AAtil$ and $\AAhat$ be non-negative symmetric matrices
  such that
 $\DDtil \pge \AAtil$, $\DDhat \pge \AAhat$, $\DDtil \Approx{\epsilon} \DDhat$
  and $\DDtil - \AAtil \Approx{\epsilon} \DDhat - \AAhat$.
Then,
\[
  1 -  \lambda_{max} (\DDhat^{-1} \AAhat )
  \geq 
  ( 1 -   \lambda_{max} (\DDtil^{-1} \AAtil ) ) \exp(-2 \epsilon),
\]
and 
\[
  1 - \lambda_{min} (\DDhat^{-1} \AAhat )
  \leq 
  (1 - \lambda_{min} (\DDtil^{-1} \AAtil ))\exp(2 \epsilon).
\]
\end{proposition}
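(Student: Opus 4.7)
The plan is to reduce both inequalities to a comparison of extremal eigenvalues of the symmetric matrix $\DDhat^{-1/2}(\DDhat-\AAhat)\DDhat^{-1/2}$ and its tilde counterpart, and then to pass between the two by a single application of Proposition~\ref{pro:XAX}. Since conjugation by $\DDhat^{1/2}$ turns $\DDhat^{-1}\AAhat$ into the symmetric matrix $\DDhat^{-1/2}\AAhat\DDhat^{-1/2}$ without changing eigenvalues, the numbers $1 - \lambda_{max}(\DDhat^{-1}\AAhat)$ and $1 - \lambda_{min}(\DDhat^{-1}\AAhat)$ are respectively the smallest and largest eigenvalues of $\DDhat^{-1/2}(\DDhat-\AAhat)\DDhat^{-1/2}$, and the analogous identities hold in the tilde variables.

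The first step is to replace the middle factor of the sandwich: applying Fact~\ref{fact}e to the hypothesis $\DDhat-\AAhat \Approx{\epsilon} \DDtil-\AAtil$ with conjugator $\DDhat^{-1/2}$ gives
\[
\DDhat^{-1/2}(\DDhat-\AAhat)\DDhat^{-1/2} \;\Approx{\epsilon}\; \DDhat^{-1/2}(\DDtil-\AAtil)\DDhat^{-1/2},
\]
which costs a factor $e^{\pm\epsilon}$ in either extremal eigenvalue.

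The heart of the argument is swapping the outer sandwich factors from $\DDhat^{-1/2}$ to $\DDtil^{-1/2}$. Setting $\XX \defeq \DDhat^{-1/2}\DDtil^{1/2}$, all four diagonals commute, so
\[
\DDhat^{-1/2}(\DDtil-\AAtil)\DDhat^{-1/2} = \XX\left(\DDtil^{-1/2}(\DDtil-\AAtil)\DDtil^{-1/2}\right)\XX.
\]
The hypothesis $\DDtil \Approx{\epsilon} \DDhat$ together with diagonality forces every entry of $\XX^{2} = \DDhat^{-1}\DDtil$ into $[e^{-\epsilon}, e^{\epsilon}]$, so $\lambda_{min}(\XX)^{2} \geq e^{-\epsilon}$ and $\lambda_{max}(\XX)^{2} \leq e^{\epsilon}$. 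Because $\DDtil \pge \AAtil$, the inner matrix $\DDtil^{-1/2}(\DDtil-\AAtil)\DDtil^{-1/2}$ is positive definite, so Proposition~\ref{pro:XAX} applies and picks up a second factor of $e^{\pm\epsilon}$ on each extremal eigenvalue.

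Combining the two steps and translating back through the opening identifications gives both asserted inequalities with the compound factor $e^{\pm 2\epsilon}$. The whole argument is routine bookkeeping; the only care required is aligning the two $e^{\pm\epsilon}$ factors in the direction appropriate to each of the two inequalities, and checking at the very start that $\DDhat^{-1}\AAhat$ has real eigenvalues so that $\lambda_{max}$ and $\lambda_{min}$ are meaningful — which is exactly what the reduction to the symmetric sandwich form establishes.
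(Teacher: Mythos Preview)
Your proposal is correct and is essentially the paper's own argument: both proofs reduce to eigenvalue bounds on the symmetric sandwiches $\DD^{-1/2}(\DD-\AA)\DD^{-1/2}$, pick up one factor of $e^{\pm\epsilon}$ from the hypothesis $\DDtil-\AAtil \Approx{\epsilon} \DDhat-\AAhat$ via Fact~\ref{fact}e, and the second factor from Proposition~\ref{pro:XAX} applied with the diagonal $\XX$ coming from $\DDtil \Approx{\epsilon} \DDhat$. The only cosmetic difference is the order of the two steps---the paper conjugates first by $\DDtil^{-1/2}$ and then uses $\XX = \DDtil^{-1/2}\DDhat^{1/2}$ to pass to the hat side, whereas you conjugate first by $\DDhat^{-1/2}$ and then use $\XX = \DDhat^{-1/2}\DDtil^{1/2}$ to pass the right-hand side to the tilde form; either ordering yields the same $e^{\pm 2\epsilon}$.
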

\begin{proof}
Let $\lambda$ be the largest eigenvalue of $\DDtil^{-1} \AAtil$, and let $\mu = 1- \lambda$.
Then, $\mu $ is the smallest eigenvalue of $\II - \DDtil^{-1/2} \AAtil \DDtil^{-1/2}$.
As 
\[
  \DDhat - \AAhat \pgeq \exp(-\epsilon) (\DDtil - \AAtil),
\]
Fact \ref{fact}e tells us that
\[
  \DDtil^{-1/2} \DDhat \DDtil^{-1/2}
 - 
  \DDtil^{-1/2} \AAhat \DDtil^{-1/2}
\pgeq
  \exp(-\epsilon) (\II - \DDtil^{-1/2}\AAtil \DDtil^{-1/2}).
\]
So, the smallest eigenvalue of
\[
  \DDtil^{-1/2} \DDhat \DDtil^{-1/2}
 - 
  \DDtil^{-1/2} \AAhat \DDtil^{-1/2}
\]
is at least $ \mu \exp(-\epsilon)$.

Let $\XX = \DDtil^{-1/2} \DDhat^{1/2}$.
As $\lambda_{min} (\XX)^{2} \geq \exp(-\epsilon)$,
 Proposition~\ref{pro:XAX} allows us to  conclude that the smallest eigenvalue
  of
\begin{equation}\label{eqn:pro:DhatAhat}
  \II 
 - 
  \DDhat^{-1/2} \AAhat \DDhat^{-1/2}
\end{equation}
is at least
  $\mu \exp(-2\epsilon)$.

We may similarly conclude that the largest eigenvalue
 of \eqref{eqn:pro:DhatAhat} is at most
\[
(1 - \lambda_{min} (\DDtil^{-1} \AAtil )) \exp (2 \epsilon).
\]
\end{proof}

\begin{proof}[Proof of Lemma~\ref{lem:DhatAhat}]
Let $\DDtil = \DD$ and $\AAtil = \AA \DD^{-1} \AA$.
From Proposition~\ref{pro:ADinvA}, we know that the largest eigenvalue
  of $\DDtil^{-1} \AAtil$ is at most $(1 - \lambda)^{2}$, 
  and so 
  $1 - \lambda_{max} (\DDtil^{-1} \AAtil)  \geq   1- (1- \lambda)^{2}$.
Proposition~\ref{pro:DhatAhat} then implies that
\[
  1 - \lambda_{max} (\DDhat^{-1} \AAhat) \geq 
  (1 - (1 - \lambda)^{2})  \exp(-2\epsilon).
\]

As the smallest eigenvalue of $\DDtil^{-1} \AAtil$ is at least zero,
  the largest eigenvalue of $\II - \DDtil^{-1} \AAtil$ is at most $1$.
So, Proposition~\ref{pro:DhatAhat} tells us that the largest eigenvalue
  of $\II - \DDhat^{-1} \AAhat$ is at most $\exp(2\epsilon)$.
This in turn implies that the smallest eigenvalue of
  $\DDhat^{-1} \AAhat$ is at least $1 - \exp(2\epsilon)$.
\end{proof}

\end{appendix}

\end{document}